\renewcommand\theequation{\thesection.\@arabic\c@equation}
\newcommand{\RR}{\mathbb{R}}
\newcommand{\Ez}{{\mathcal{E}}_{\zz}}
\chardef\csname pre amssym.def
\def\undefine#1{\let#1\undefined}
\def\newsymbol#1#2#3#4#5{\let\next@\relax
 \ifnum#2=\@ne\let\next@\msafam@\else
 \ifnum#2=\tw@\let\next@\msbfam@\fi\fi
 \mathchardef#1="#3\next@#4#5}
\def\mathhexbox@#1#2#3{\relax
 \ifmmode\mathpalette{}{\m@th\mathchar"#1#2#3}%
 \else\leavevmode\hbox{$\m@th\mathchar"#1#2#3$}\fi}
\def\hexnumber@#1{\ifcase#1 0\or 1\or 2\or 3\or 4\or 5\or 6\or 7\or 8\or
 9\or A\or B\or C\or D\or E\or F\fi}
\font\teneufm=eufm10 \font\seveneufm=eufm7 \font\fiveeufm=eufm5
\newcommand{\eqn}{\begin{eqnarray}}
\newcommand{\een}{\end{eqnarray}}
\newtheorem {Theorem}  {Theorem}
\numberwithin{Theorem}{section}
\newtheorem{Lemma}[Theorem]{Lemma}
\newtheorem{Claim}[Theorem]{Claim}
\newtheorem{Proposition}[Theorem]{Proposition}
\theoremstyle{Remark}
\newtheorem{Remark}[Theorem]{Remark}
\newtheorem{Corollary}[Theorem]{Corollary}
\newcommand{\xx}{\mbox{\boldmath $x$}}
 \newcommand{\ww}{{\bf w}}
\newcommand{\uu}{\mbox{\boldmath $u$}}
\newcommand{\zz}{\mbox{\boldmath $z$}}
\newcommand{\HH}{\mbox{\boldmath $H$}}
\begin{document}

\title[Micropolar fluids with nonlinear damping]{
Improved decay results for micropolar flows with nonlinear damping
}


\author[C. F. Perusato]{Cilon F. Perusato}
\address[C. F. Perusato]{Departamento de Matem\'atica. Universidade Federal de Pernambuco, 
 CEP 50740-560, Recife - PE. Brazil}
\email{cilon.perusato@ufpe.br}

\author[F. D. Vega]{Franco D. Vega}
\address[F. D. Vega]{Departamento de Matem\'atica. Universidade Federal de Pernambuco, 
	CEP 50740-560, Recife - PE. Brazil}
\email{franco.diaz@ufpe.br}

\thanks{C. F. Perusato was partially supported by Cnpq through grant $\#$ 310444/2022 - 5, bolsa PQ} 

\keywords{Micropolar equations, 
	damping term, asymptotic behavior, asymptotic stability}

\subjclass[2000]{35B40  (primary), 35Q35 (secondary)}

\date{\today}

%

%

\begin{abstract}
We examine the long-time behavior of solutions (and their derivatives) to the micropolar equations with nonlinear velocity damping. Additionally, we get a speed-up gain of $ t^{1/2} $ for the angular velocity, consistent with established findings for classic micropolar flows lacking nonlinear damping. Consequently, we also obtain a sharper result regarding the asymptotic stability of the micro-rotational velocity $\ww(\cdot,t)$. Related results of independent interest are also included.
\end{abstract}

\maketitle

\section{Introduction}

In this work we investigate the large time behavior of solutions to the following micropolar equations with nonlinear velocity damping in $\RR^3$.
\begin{equation}\label{micropolar}
	\begin{split}
		\mbox{\boldmath $u$}_t
		+
		\mbox{\boldmath $u$} \cdot \nabla \mbox{\boldmath $u$}
		+
		\nabla \:\!{ \sf{p}}
		=  
		(\:\!\mu + \chi\:\!) \, \Delta \mbox{\boldmath $u$}
		\,+\,2\,
		\chi \, \nabla \times {\bf w} - \eta\, |\uu|^{\beta -1}\uu , \\
		{\bf w}_t
		+
		\mbox{\boldmath $u$} \cdot \nabla \mbox{\bf w}
		= 
		\gamma \;\! \, \Delta \mbox{\bf w}
		\,+\,\kappa\, \nabla \:\!(\:\!\nabla \cdot\;{\bf w} \:\!)
		\,+\,2\,
		\chi \, \nabla \times\, \mbox{\boldmath $u$}
		\,-\, 4 \, \chi \, {\bf w}, \\
		\nabla \cdot \mbox{\boldmath $u$}(\cdot,t)   =   0,
		\\
		(\uu,\ww)(\cdot, 0) = (\uu_0,\ww_0) \in \!\;\!\mbox{\boldmath $L$}^{2}_{\sigma}(\mathbb{R}^{3}) \times \!\;\!\mbox{\boldmath $L$}^{2}(\mathbb{R}^{3})  ,
	\end{split}
\end{equation}
where the coefficients 
$ \mu $ (kinematic viscosity), 
$ \gamma $ (angular viscosity), 
 $ \chi $ (vortex or micro-rotation viscosity) and $ \beta\geq 1,\,\eta $ (damping coefficients)
are positive, 
and $ \kappa $ (gyroviscosity) is nonnega\-tive, 
all assumed to be constant.  The functions 
\mbox{$ \mbox{\boldmath $u$} = \mbox{\boldmath $u$}(\xx,t) $}, $\ww = \ww(\xx,t) $, 
and $ {\sf p} = {\sf p}(\xx,t) $
are the flow velocity, micro-rotational velocity and the total pressure, respectively, for $ t>0 $ and $ x \in \RR^n $.
Here,
$ \!\;\!\mbox{\boldmath $L$}^{2}_{\sigma}(\mathbb{R}^{n}) $
denotes the
space
of solenoidal fields
$ \:\!\mbox{\bf v} = (v_{1}, v_{2}, \ldots, v_{n}) \!\:\!\in
\mbox{\boldmath $L$}^{2}(\mathbb{R}^{n}) \equiv L^{2}(\mathbb{R}^{n})^{n} \!\:\!$
with
$ \nabla \!\cdot \mbox{\bf v} \!\;\!= 0 $
in the distributional sense.

The so-called Leray-Hopf (or simply Leray) solutions in $ \mathbb{R}^3 \!$ are 
global mappings (see \cite{Leray1934}) 
$$(\mbox{\boldmath $u$}, \mbox{\bf w})(\cdot,t)
\in C_{\sf w}([\;\!0, \infty), L^{2}_{\sigma}(\mathbb{R}^{3}) 
\hspace{-0.010cm} \times \hspace{-0.020cm} L^{2}(\mathbb{R}^{3})) 
 \!\;\!\cap\hspace{+0.075cm} L^{2}((0, \infty), 
\dot{H}^{\!\;\!1}(\mathbb{R}^{3}) 
\hspace{-0.020cm} \times \hspace{-0.025cm}
\dot{H}^{\!\;\!1}(\mathbb{R}^{3})) $$
that satisfy the equations in weak sense for $ t > 0 $
and in addition
the energy estimate

\begin{equation}\label{eqn_energy_inequality}
\,\,	\|\!\;\!\;\!(\mbox{\boldmath $u$},\mbox{\bf w})(\cdot,t)\!\;\!\;\!
	\|_{\mbox{}_{\scriptstyle \!\:\!L^{\!\;\!2}}}^{\:\!2} 
	\!\;\!+\;\! 
	2 \! \int_{\!\:\!s}^{\:\!t}  \mu \;\! 
	\|\!\;\!\;\!D\mbox{\boldmath $u$}(\cdot,\tau)\!\;\!\;\!
	\|_{\mbox{}_{\scriptstyle \!\:\!L^{\!\;\!2}}}^{\:\!2} 
+
	\gamma \;\! 
	\|\!\;\!\;\!D\mbox{\bf w}(\cdot,\tau)\!\;\!\;\!
	\|_{\mbox{}_{\scriptstyle \!\:\!L^{\!\;\!2}}}^{\:\!2}  
	 d\tau
	\;\!\leq\,
	\|\!\;\!\;\!(\mbox{\boldmath $u$},\mbox{\bf w})(\cdot,s)\!\;\!\;\!
	\|_{\mbox{}_{\scriptstyle \!\:\!L^{\!\;\!2}}}^{\:\!2} 
\end{equation}
\mbox{} \vspace{-0.150cm} \\
for all $ t > s $,
for $ s = 0 $
and almost every $ s > 0 $. The presence of the nonlinear damping term $ \eta\,|\uu|^{\beta-1}\,\uu $ is clearly beneficial to the regularity of weak solutions as we will see. Due to this, one has the following extra regularity property for the velocity field: $ \uu(\cdot, t) \in L^{\beta+1}(\,(0,\infty), L^{\beta+1} (\RR^3)\, ) $, see e.g \cite{CaiJiu2008}. In fact, when $ \beta > 3  $  (or when $ \beta = 3 $ and $4\,\eta\,(\mu + \chi) > 1$) and the initial data are also in $ {\bf H}^1_\sigma \times {\bf{H}}^1 $ there exists a unique global strong solution of \eqref{micropolar}, see Z. Ye \cite{Ye2018}. In order to have the same result, when $ 1\leq\beta <3 $, it is required (as usual) some smallness condition on the initial data, as pointed out by W. Wang and Y. Long \cite{WangLong2021} .     

Let us now give some physical motivation regarding  \eqref{micropolar}. In the 1960s, C.\;Eringen \cite{Eringen1966} 
introduced the micropolar fluids model 
(see the equations \eqref{micropolar} with $ \eta =0 $ above). 
It deals with a class of fluids 
which exhibit certain microscopic effects 
arising from micro-motions and local structures 
of the fluid particles that lead to a
nonsymmetric stress tensor, 
and which are often called polar fluids. 
It includes, as a particular case, 
the well-established Navier-Stokes model~\cite{Lukaszewicz1999}. When one also considers the nonlinear damping term several applications can be taken into account in view of the fact that a system with nonlinear damping arises from the resistance to the motion of the flows. For that reason it describes various physical situations such as porous media flow, drag or friction effects, and some dissipative mechanisms. So, this plays an important role to balance the convection term. 

The existence of Leray solutions (originally constructed for the Navier-Stokes system in \cite{Leray1934}) for the equations \eqref{micropolar}
and other similar dissipative systems is well known, but their uniqueness and exact
regularity properties are still open,\!\footnote{%
	%
	%
	Recent advances seem to give some indication
	that 3D Leray solutions may fail to be unique
	(and therefore smooth), see
	\cite{Albritton2022, BuckmasterVicol2019, JiaSverak2015}
	and references therein.
}
except for small initial data
in suitable spaces \cite{MR1666509, MR0467030, %
	Lukaszewicz1999, MR2778615} or in case of nonlinear damping as mentioned before, when $ \beta >3 $.  Recently, interesting results regarding the regularity of solutions to the micropolar equation without nonlinear damping (i.e.,  when $ \eta = 0  $) was studied separately for $\uu $ and $\ww$ by using a new notion of partial suitable solutions, see \cite{Chamorro, MR4523469}. 

	\subsection{Organization of the paper}
	This work is organized as follows. In subsection 1.2 below, we shall delineate the primary contributions elucidated in this paper.  In Section 2 some preliminary results are provided to prepare the way for the derivation of the main theorems described  thereafter. Although much of this material is essentially known, with the exception of Theorem \ref{inverse-wiegner}, which is novel, several proofs are newly presented and a few improvements are offered. In Section 3, we present the proofs of our main results. This section is divided into two parts: the first part is dedicated to proving Theorem \ref{thm_L2}, while the second part is dedicated to providing the proof of Theorem \ref{thm_inequalities}. An appendix supplements the discussion by providing the proof of a claim necessary to establish Theorem \ref{inverse-wiegner}.    

\subsection{Main results}
We improve some previous results (originally obtained by H. Li and Y. Xiao in \cite{MR4357379}). Namely, we provide a faster decay for the $ L^2 $ norm of the micro-rotational field. We also obtain a better range for the nonlinear damping coefficient $ \beta $, see c.f. Theorem \ref{thm_L2} and remark \ref{rmk1} below. For the Navier-Stokes flows with nonlinear damping, the large time decay was studied by Y. Jia, X. Zhang and B-Q. Dong in \cite{MR2781892}. That being mentioned, our primary result can be stated as follows.

\begin{Theorem}\label{thm_L2}
		Let $(\uu_0,\ww_0) \in {\bf L}^2_\sigma(\RR^3) \times {\bf L}^2 (\RR^3)$ and $ (\uu, \ww)(\cdot,t) $ any Leray solution to  \eqref{micropolar}. Then, 
		\begin{equation}\label{eqn_u_thm1}
			 \|\uu(\cdot,t) \|_{L^2(\RR^n)} \to  0 \quad \text{as}\,\,t\to\infty, \quad \forall \beta \geq 7/3. 
		\end{equation}
	Moreover, for the micro-rotational  field $ \ww(\cdot, t) $, one has
		\begin{equation}\label{eqn_w_thm1}
		t^{1/2}\|\ww(\cdot,t) \|_{L^2(\RR^n)} \to  0 \quad \text{as}\,\,t\to\infty, \quad \forall \beta \geq 1. 
	\end{equation}
\end{Theorem}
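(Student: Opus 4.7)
The plan is to apply Schonbek's Fourier splitting method, exploiting two structural features of system \eqref{micropolar}: the nonlinear damping $\eta|\uu|^{\beta-1}\uu$, which upgrades the standard energy inequality by the extra dissipation $2\eta\int_{s}^{t}\|\uu(\cdot,\tau)\|_{L^{\beta+1}}^{\beta+1}\,d\tau$, and the zeroth-order friction term $-4\chi\ww$ in the micro-rotation equation, which will be responsible for the extra $t^{1/2}$ gain. Adding the two energy identities and absorbing the cross term $8\chi\!\int\uu\cdot(\nabla\times\ww)\,dx$ into the parabolic dissipation via Young's inequality together with $\|\nabla\times\ww\|_{L^2}\leq\|\nabla\ww\|_{L^2}$ yields a joint dissipation inequality which drives the whole analysis.

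For \eqref{eqn_u_thm1}, I would introduce a time-dependent frequency ball $B(t)=\{\xi:|\xi|\leq R(t)\}$ with $R(t)^{2}\sim 1/t$, so that the parabolic dissipation controls the high-frequency part of $\|\uu\|_{L^2}^2$ by Plancherel, leaving an ODE of the form
$$\frac{d}{dt}\|\uu(\cdot,t)\|_{L^2}^{2} + \frac{c}{t}\|\uu(\cdot,t)\|_{L^2}^{2}\;\leq\;\frac{c}{t}\!\int_{B(t)}\!\bigl|\mathcal{F}\uu(\xi,t)\bigr|^{2}d\xi \;+\; \text{(cross terms with $\ww$)}.$$
Writing the Fourier transform of $\uu$ via Duhamel, the damping contribution is bounded by $\||\uu|^{\beta-1}\uu\|_{L^1}=\|\uu(\cdot,\tau)\|_{L^\beta}^{\beta}$; interpolating this between $L^{2}$ and $L^{\beta+1}$ and using $\uu\in L^{\beta+1}((0,\infty),L^{\beta+1})$ makes the resulting time integral finite precisely when $\beta\geq 7/3$. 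Substituting back into the Fourier splitting ODE and invoking Theorem \ref{inverse-wiegner}, which transfers decay of the linearization to that of the nonlinear problem, yields $\|\uu(\cdot,t)\|_{L^2}\to 0$.

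For \eqref{eqn_w_thm1}, the crucial observation is that in Fourier variables the micro-rotation equation reads schematically
$$\partial_{t}\mathcal{F}\ww + \bigl(\gamma|\xi|^{2}+4\chi\bigr)\mathcal{F}\ww \;=\; 2\chi\,\mathcal{F}(\nabla\times\uu)\;-\;\mathcal{F}(\uu\cdot\nabla\ww)\;+\;\kappa\,\mathcal{F}(\nabla(\nabla\cdot\ww)),$$
so the semigroup carries the uniform exponential factor $e^{-4\chi(t-\tau)}$ in Duhamel's formula. This forces the time integral to concentrate near $\tau=t$ and provides an unconditional gain of $t^{-1/2}$ in the Fourier splitting argument, regardless of the value of $\beta$. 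Combined with the $L^{2}$-decay of $\uu$ already established and with Plancherel estimates on $B(t)$, this yields $t^{1/2}\|\ww(\cdot,t)\|_{L^2}\to 0$ for every $\beta\geq 1$, since the $\ww$-estimate never uses the damping beyond the consequence that $\uu(\cdot,t)\to 0$ in $L^{2}$.

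The main obstacle is the endpoint $\beta=7/3$ in \eqref{eqn_u_thm1}: the interpolation used to absorb the Fourier transform of the damping term is critical at this exponent and leaves no slack in time integrability, forcing one to invoke the inverse-Wiegner-type Theorem \ref{inverse-wiegner} to reduce matters to decay of the linearization. A secondary difficulty lies in the $\ww$-Duhamel formula, where the transport nonlinearity $\uu\cdot\nabla\ww$ and the coupling source $\nabla\times\uu$ must be controlled without destroying the exponential gain from $e^{-4\chi(t-\tau)}$; this typically requires splitting the time integral near $t$ and carefully balancing the decay of $\|\uu\|_{L^2}$ against the singular heat-kernel factor at the frequency scale $R(t)\sim t^{-1/2}$.
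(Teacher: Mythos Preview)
Your proposal has two structural problems that would prevent it from delivering the stated theorem.

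\textbf{Circularity with Theorem \ref{inverse-wiegner}.} You invoke the inverse-Wiegner result to ``transfer decay of the linearization to the nonlinear problem'', but Theorem \ref{inverse-wiegner} goes in the \emph{opposite} direction (it assumes $\lambda_0(\alpha)<\infty$ for the nonlinear solution and concludes decay for $e^{\mathbb{A}t}\zz_0$), and moreover its proof in the paper explicitly uses Theorem \ref{thm_L2} (the line ``By Theorem \ref{thm_L2} we have $\lambda_0(0)=0$''). So you cannot call on it here. More generally, Fourier splitting is designed to produce \emph{rates} under extra low-frequency assumptions (e.g.\ $L^1\cap L^2$ data); for bare $L^2$ data you have no control on $\int_{B(t)}|\widehat\uu|^2$, and the scheme you sketch does not close without that.

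\textbf{Order of the two estimates.} You prove \eqref{eqn_u_thm1} first and then feed ``the $L^2$-decay of $\uu$ already established'' into the $\ww$-argument. But \eqref{eqn_u_thm1} is only claimed for $\beta\geq 7/3$, so your route would yield \eqref{eqn_w_thm1} only in that range, not for all $\beta\geq 1$.

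The paper avoids both issues by a direct Duhamel argument that rests entirely on the elementary gradient decay of Lemma \ref{estimativa grad un medio}, namely $t^{1/2}\|(D\uu,D\ww)(\cdot,t)\|_{L^2}\to 0$, which holds for every $\beta\geq 1$ and requires no low-frequency information. One \emph{first} treats $\ww$: writing $\ww$ via Duhamel against the semigroup $e^{\gamma\Delta(t-\tau)}e^{-4\chi(t-\tau)}$, the terms $\uu\cdot\nabla\ww$, $\nabla(\nabla\cdot\ww)$ and $\nabla\times\uu$ are bounded using only Lemma \ref{lema D alfa} and Lemma \ref{estimativa grad un medio}; the exponential factor absorbs the extra $t^{1/2}$ with no call on $\|\uu\|_{L^2}\to 0$, giving \eqref{eqn_w_thm1} for all $\beta\geq 1$. \emph{Then} one treats $\uu$ via Duhamel against $e^{(\mu+\chi)\Delta(t-t_0)}$: the convective term is handled by Lemma \ref{estimativa grad un medio}; the damping term is estimated by the Gagliardo--Nirenberg inequality $\|\uu\|_{L^\beta}^\beta\leq C\|\uu\|_{L^2}^{(6-\beta)/2}\|D\uu\|_{L^2}^{3(\beta-2)/2}$ (not by $L^2$--$L^{\beta+1}$ interpolation), and the resulting time integral is finite exactly when $\beta\geq 7/3$; finally the source $\nabla\times\ww$ is controlled using the just-proved $t^{1/2}\|\ww\|_{L^2}\to 0$. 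No Fourier splitting and no inverse-Wiegner step are needed.
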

Now, let us delve into the results concerning the $\dot{H}^m$ norms of the solutions. For convenience, we define $ \lambda_0(\alpha):=\limsup_{t \to \infty}\,t^\alpha\,\|\uu(\cdot, t) \|_{L^2(\RR^n)} $, for some constant $ \alpha \geq 0 $. We assume that 
\begin{equation}\label{eqn:lambda-assumption}
	\lambda_0(\alpha) < \infty,
\end{equation}
$ \text{for some }\,\,\alpha \geq 0$.  

\begin{Remark}[Reasonability of the assumption \eqref{eqn:lambda-assumption}]\label{rmk1}
	Theorem \ref{thm_L2} can be formulated in terms of the decay character, see \cite{MR2493562, MR3493117, MR3355116}. Moreover, by using the so-called Fourier Splitting technique developed by M. Schonbek in \cite{MR775190} , the authors in \cite{MR4357379} show that when the initial data are in $L^1 \cap L^2$ the assumption \eqref{eqn:lambda-assumption} 
	is valid with $\alpha = 3/4$, for all $ \beta > \frac{14}{5} $. Actually, adapting the arguments presented in \cite{MR4357379} and here in our  paper it is possible to obtain the same result for all $ \beta > 8/3 $, but this is out of the scope of this work.      
	All of these facts ensure that the assumption \eqref{eqn:lambda-assumption} above holds for some values of $ \alpha \geq 0 $.
\end{Remark}
We can finally state the result for all derivatives of order $ m $ which is the following asymptotic inequalities in $\dot{H}^m$.
\begin{Theorem}\label{thm_inequalities}
Let $(\uu_0,\ww_0) \in {\bf L}^2_\sigma(\RR^3) \times {\bf L}^2 (\RR^3)$ and $ (\uu, \ww)(\cdot,t) $ any Leray solution to  \eqref{micropolar}. Then, 
	\begin{subequations}\label{inequality}
	\begin{equation}\label{inequality_z}
		\limsup_{t \to \infty}\, t^{\alpha+\frac{m}{2}} \|D^m \uu(\cdot,t)\|_{L^2(\RR^3)} \leq  C_{\alpha,m}\;\; \lambda_0(\alpha)
	\end{equation}
	and, for the micro-rotational field $\mathbf{w}$, 
	\begin{equation}\label{inequality_w}
		\limsup_{t \to \infty} t^{\alpha+\frac{m+1}{2}} \|D^m \ww(\cdot,t)\|_{L^2(\RR^3)} \leq  \,\frac{{C}_{\alpha,m}}{4\,\chi} \;\; \lambda_0(\alpha),
	\end{equation}	
\end{subequations}
for each $ m \geq 2 $ and $ \beta \geq \frac{4\,\alpha + 7}{4\,\alpha + 3} $, where $$ C_{\alpha,m}= 2^{\alpha + m/2}\,(\mu + \chi)^{-\frac{m+1}{2}}.  $$
\end{Theorem}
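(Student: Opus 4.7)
The plan is to argue by induction on $m\geq 2$, combining higher-order energy inequalities for the differentiated system with a dyadic time pigeonhole that converts the hypothesis \eqref{eqn:lambda-assumption} into pointwise decay rates at every derivative order. The constant $C_{\alpha,m}=2^{\alpha+m/2}(\mu+\chi)^{-(m+1)/2}$ encodes the dyadic window factor together with the diffusion coefficient $(\mu+\chi)$, which accumulates one $(\mu+\chi)^{-1/2}$ per derivative; the additional $t^{1/2}$ gain for $\ww$ in \eqref{inequality_w} is supplied by the damping $-4\chi\,\ww$ in the second equation of \eqref{micropolar}, which turns its energy identity into one with exponential weight.

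Applying $D^m$ to both equations in \eqref{micropolar}, pairing with $D^m\uu$ and $D^m\ww$, and using $\nabla\!\cdot\!\uu=0$ together with Kato--Ponce commutator estimates for the convective terms and Gagliardo--Nirenberg to handle $|\uu|^{\beta-1}\uu$, one arrives at
\begin{equation*}
\tfrac{d}{dt}\|D^m\uu\|_{L^2}^{2} + 2(\mu+\chi)\|D^{m+1}\uu\|_{L^2}^{2} \,\leq\, 4\chi\,\|D^m\ww\|_{L^2}\|D^{m+1}\uu\|_{L^2} + R_u(t),
\end{equation*}
\begin{equation*}
\tfrac{d}{dt}\|D^m\ww\|_{L^2}^{2} + 2\gamma\|D^{m+1}\ww\|_{L^2}^{2} + 8\chi\|D^m\ww\|_{L^2}^{2} \,\leq\, 4\chi\,\|D^{m+1}\uu\|_{L^2}\|D^m\ww\|_{L^2} + R_w(t),
\end{equation*}
where $R_u$ collects the convective term and the damping contribution $\eta\int D^m(|\uu|^{\beta-1}\uu)\cdot D^m\uu$. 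The condition $\beta\geq(4\alpha+7)/(4\alpha+3)$ is precisely the Gagliardo--Nirenberg/Sobolev balance under which, once the rate $\|\uu(t)\|_{L^2}\lesssim t^{-\alpha}$ from \eqref{eqn:lambda-assumption} is inserted, the damping contribution can be absorbed into $(\mu+\chi)\|D^{m+1}\uu\|_{L^2}^{2}$.

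For \eqref{inequality_z}, the induction step from order $m$ to $m+1$ proceeds by integrating the $\uu$-inequality on $[t/2,t]$: the dissipation gives $\int_{t/2}^{t}(\mu+\chi)\|D^{m+1}\uu\|_{L^2}^{2}\,d\tau\leq\tfrac{1}{2}\|D^m\uu(t/2)\|_{L^2}^{2}+o\bigl(t^{-(2\alpha+m)}\bigr)$, where the remainder is lower order by the inductive hypotheses at orders $\leq m$; the mean value theorem then produces $\tau_t\in[t/2,t]$ at which $\|D^{m+1}\uu(\tau_t)\|_{L^2}^{2}$ is of the correct size $C_{\alpha,m+1}^{2}\,\lambda_0(\alpha)^2\,t^{-(2\alpha+m+1)}$, and the bound is propagated from $\tau_t$ to $t$ by integrating the differential inequality and splitting the cross term $4\chi\|D^m\ww\|\|D^{m+1}\uu\|$ via weighted Young's inequality into a piece absorbable by the dissipation and a piece $\|D^m\ww\|_{L^2}^{2}$ that is strictly lower order by the inductive hypothesis \eqref{inequality_w}. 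The base case $m=2$ is reached by two passes of the $m=0\to 1$ and $m=1\to 2$ pigeonhole step, the first of which relies only on the energy inequality \eqref{eqn_energy_inequality} and sits in Section~2.

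For \eqref{inequality_w}, Young's inequality turns the $\ww$-inequality into $\tfrac{d}{dt}\|D^m\ww\|_{L^2}^{2}+4\chi\|D^m\ww\|_{L^2}^{2}\leq c\,\|D^{m+1}\uu\|_{L^2}^{2}+\widetilde R_w(t)$, and Gronwall then gives $\|D^m\ww(t)\|_{L^2}^{2}\leq e^{-4\chi t}\|D^m\ww_0\|_{L^2}^{2}+c\!\int_0^t\! e^{-4\chi(t-s)}\|D^{m+1}\uu(s)\|_{L^2}^{2}\,ds+\text{l.o.t.}$ The convolution kernel $e^{-4\chi(t-s)}$ is concentrated in an $O(1/\chi)$ window near $s=t$, so the integral is asymptotically $\tfrac{c}{4\chi}\|D^{m+1}\uu(t)\|_{L^2}^{2}(1+o(1))$; substituting the bound for $\|D^{m+1}\uu\|_{L^2}$ from \eqref{inequality_z} at order $m+1$ and simplifying with the identity $C_{\alpha,m+1}=\sqrt{2/(\mu+\chi)}\,C_{\alpha,m}$ yields \eqref{inequality_w} with the prefactor $C_{\alpha,m}/(4\chi)$. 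The principal obstacle is the Gagliardo--Nirenberg analysis of $\||\uu|^{\beta-1}\uu\|_{\dot H^m}$ in $R_u$, which pins down the sharp threshold on $\beta$ and concentrates the $\alpha$--$\beta$--$m$ arithmetic; a secondary but delicate point is the propagation from $\tau_t$ to $t$ without inflating the constant $C_{\alpha,m+1}$.
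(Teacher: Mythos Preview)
Your approach is genuinely different from the paper's. The paper does \emph{not} use higher-order energy identities or a dyadic pigeonhole induction; instead it writes the Duhamel representation
\[
\zz(\cdot,t)=e^{\mathbb{A}(t-t_0)}\zz(\cdot,t_0)+\int_{t_0}^{t}e^{\mathbb{A}(t-\tau)}\mathbf{Q}(\cdot,\tau)\,d\tau,
\]
applies $D^m$, and estimates each piece pointwise in $t$ using the heat-kernel bound of Lemma~\ref{lema D alfa}. The linear contribution is handled by the inverse-Wiegner result Theorem~\ref{inverse-wiegner}, which directly yields $\limsup_{t\to\infty}t^{\alpha+m/2}\|D^m e^{\mathbb{A}(t-t_0)}\zz(\cdot,t_0)\|_{L^2}\le C_{\alpha,m}\lambda_0(\alpha)$; each nonlinear integral (convective, damping, curl-coupling) is then shown to have \emph{zero} limsup after multiplication by $t^{\alpha+m/2}$, with the threshold $\beta\ge(4\alpha+7)/(4\alpha+3)$ emerging from the time-integral $\int_{t_0}^{t}(t-\tau)^{-3/4-m/2}\tau^{(6-\beta(4\alpha+3))/4}d\tau$. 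For $\ww$ the same scheme is run with the weighted kernel $e^{-4\chi(t-\tau)}e^{\gamma\Delta(t-\tau)}$, and only the term carrying $\nabla\times\uu$ survives, producing the factor $(4\chi)^{-1}$.

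Your sketch could plausibly recover the correct \emph{orders} $t^{-\alpha-m/2}$ and $t^{-\alpha-(m+1)/2}$, but there is a real gap concerning the stated constant $C_{\alpha,m}=2^{\alpha+m/2}(\mu+\chi)^{-(m+1)/2}$. In the paper this constant comes entirely from the single linear semigroup estimate, with every nonlinear term contributing zero to the limsup; by contrast, your inductive pigeonhole step picks up a factor at least $2^{\alpha+(m+1)/2}(\mu+\chi)^{-1/2}$ when passing from order $m$ to $m+1$ (from the window length $t/2$ and the evaluation at $t/2$), and a further loss in the propagation from $\tau_t$ to $t$, so the product over $m$ does not collapse to $2^{\alpha+m/2}(\mu+\chi)^{-(m+1)/2}$. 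A second, smaller gap is the treatment of $R_u$: you assert that $\beta\ge(4\alpha+7)/(4\alpha+3)$ is exactly the balance allowing $\eta\int D^m(|\uu|^{\beta-1}\uu)\cdot D^m\uu$ to be absorbed, but you do not carry out the Gagliardo--Nirenberg bookkeeping (and $|\uu|^{\beta-1}$ need not be differentiable, so Kato--Ponce is not immediate); in the paper this threshold drops out of an explicit Duhamel-side computation rather than an absorption argument.
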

The case $m = 1$ is addressed in Lemma \ref{estimativa grad un medio} and in Proposition \ref{proposition}.  The last theorem allows us to obtain lower bounds for the $L^2$ norm of solutions, see e.g. \cite{MR4546674}. These estimates are closely related to the important results provided by M. Oliver and E. Titi for the Navier-Stokes equations in \cite{MR1749867}.

As a corollary of Theorem \ref{thm_inequalities}, one can also obtain the following sharper stability result. To obtain the following corollary, it suffices to apply the triangle inequality and employ Theorem \ref{thm_inequalities}. 
\begin{Corollary}		
			Let $(\uu_0,\ww_0) \in {\bf L}^2_\sigma(\RR^3) \times {\bf L}^2 (\RR^3)$ and $ (\uu, \ww)(\cdot,t) $ any Leray solution to  \eqref{micropolar}. $If \|(\uu,\ww)(\cdot,t) \| = O(t^{-\alpha})$, then
			\begin{equation*}
				\|D^m\uu(\cdot,t) - D^m\tilde{\uu}(\cdot,t) \|_{L^2} =  O(t^{-\alpha - m/2}) 
			\end{equation*}
		and 
			\begin{equation*}
				\|D^m\ww(\cdot,t) - D^m\tilde{\ww}(\cdot,t) \|_{L^2} =  O(t^{-\alpha - m/2- 1/2}), .
			\end{equation*}
for all	$ \beta \geq \frac{4\,\alpha + 7}{4\,\alpha + 3} $ and each $ m \geq 2 $, where $ (\tilde{\uu},\tilde{\ww}) $ stands for a solution to the problem \eqref{micropolar} with a perturbed initial condition 
$ \left.(\tilde{\uu}, \tilde{\ww})\right|_{t=0}=\left(\uu_{0}+a, \ww_{0}+b\right) $ and $ a,b \in {\bf L}^2_\sigma(\RR^3) \times {\bf L}^2 (\RR^3)$.  		
\end{Corollary}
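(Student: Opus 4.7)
The strategy indicated by the authors is essentially bookkeeping. Denote by $(\uu,\ww)$ and $(\tilde\uu,\tilde\ww)$ the two Leray solutions corresponding to the initial data $(\uu_0,\ww_0)$ and $(\uu_0+a,\ww_0+b)$ respectively. The hypothesis $\|(\uu,\ww)(\cdot,t)\|=O(t^{-\alpha})$ implies in particular that $\lambda_0(\alpha)<\infty$, so Theorem~\ref{thm_inequalities} is directly applicable to the unperturbed solution and yields, for every $m\ge 2$ and every $\beta\ge(4\alpha+7)/(4\alpha+3)$,
\begin{equation*}
\|D^m\uu(\cdot,t)\|_{L^2}=O(t^{-\alpha-m/2}),\qquad \|D^m\ww(\cdot,t)\|_{L^2}=O(t^{-\alpha-m/2-1/2}).
\end{equation*}

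The plan is to obtain the same two bounds for $(\tilde\uu,\tilde\ww)$ and then conclude by the triangle inequality
\begin{equation*}
\|D^m\uu(\cdot,t)-D^m\tilde\uu(\cdot,t)\|_{L^2}\le \|D^m\uu(\cdot,t)\|_{L^2}+\|D^m\tilde\uu(\cdot,t)\|_{L^2},
\end{equation*}
together with the analogous estimate for $\ww-\tilde\ww$, which immediately produces the asserted rates $O(t^{-\alpha-m/2})$ and $O(t^{-\alpha-m/2-1/2})$.

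To apply Theorem~\ref{thm_inequalities} to the perturbed solution one needs the analogue of \eqref{eqn:lambda-assumption} to hold for $\tilde\uu$ with the same exponent~$\alpha$; this is the only step that is not immediate. I would handle it via the decay character framework invoked in Remark~\ref{rmk1}: since $(a,b)\in {\bf L}^2_{\sigma}(\RR^3)\times {\bf L}^2(\RR^3)$, adding such a perturbation to $(\uu_0,\ww_0)$ does not affect the low-frequency (decay character) behaviour of the initial data, so the Fourier splitting argument used to derive \eqref{eqn:lambda-assumption} for the original flow produces the very same polynomial bound $\|\tilde\uu(\cdot,t)\|_{L^2}=O(t^{-\alpha})$ for the perturbed flow. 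Equivalently, one may simply read the hypothesis as a statement about the whole class of initial data under consideration, in which case the same rate applies verbatim to $(\uu_0+a,\ww_0+b)$.

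The main — and essentially only — technical obstacle is this transfer of the $L^2$ decay from the original to the perturbed solution; once this is settled, the remainder of the proof is nothing more than two invocations of Theorem~\ref{thm_inequalities} combined by the triangle inequality, exactly as indicated in the sentence preceding the corollary.
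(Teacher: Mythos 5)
Your proposal follows exactly the route the paper takes: the paper's entire proof is the sentence preceding the corollary, namely apply Theorem \ref{thm_inequalities} to each of the two solutions separately and combine with the triangle inequality. The one point you flag --- that Theorem \ref{thm_inequalities} must also be applicable to the perturbed solution, i.e.\ that $\|\tilde{\uu}(\cdot,t)\|_{L^2}=O(t^{-\alpha})$ --- is indeed the only nontrivial step, and the paper passes over it in silence; be aware, though, that your decay-character justification is not airtight, since an arbitrary perturbation $a\in{\bf L}^2_\sigma(\RR^3)$ can alter the low-frequency behaviour (hence the decay character) of the initial datum, so the safer reading is the alternative you also offer: the decay hypothesis $\|\cdot\|=O(t^{-\alpha})$ is implicitly assumed to hold for both the original and the perturbed flow.
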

{\bf Notation.} As usual, ${ \overset{.}{H}^{1}(\mathbb{R}^{n}) } = \overset{.}{H}^{1}(\mathbb{R}^{n})^{n} $ where
$\overset{.}{H}^{1}(\mathbb{R}^{n})$ denotes the homogeneous Sobolev space of order 1 , $e^{\nu \Delta t}$ denotes the heat semigroup. As shown above, boldface letters are used for vector quantities, as in $\uu(x,t)=(u_{1}(x,t),u_{2}(x,t),u_{3}(x,t))$ denotes the field velocity. Also
$\nabla P \equiv \nabla P(\cdot,t) $ denotes the spatial gradient of $P(\cdot,t)$ , $D_{j}=\frac{\partial }{\partial x_{j}}$ , $\nabla \cdot \uu \:=\: D_{1}u_{1} \:+\: D_{2}u_{2} \:+\: D_{3}u_{3}$ is the (spatial) divergence of $\uu(\cdot,t)$ , similarly $\uu \cdot \nabla \uu = u_{1} D_{1}\uu  + ...+ u_{n} D_{n}\uu$ . $\left | \cdot \right |_{2}$ denotes the Euclidean norm in $\mathbb{R}^{3}$ , $\left | \cdot \right |$ denotes the $ \ell^1 $ norm in $\mathbb{R}^{3}$ and $\left \| \cdot \right \|_{L^{q}(\mathbb{R}^{3})} \:\:\:\: ;\:\:\:\:  1\:\leq \:q\: \leq \:\infty$ ,are the standard norms of the Lebesgue spaces $L^{q}(\mathbb{R}^{3})$, with the vector counterparts.

\begin{equation}\label{18}
	\left \| \uu(\cdot,t) \right \|_{L^{q}(\mathbb{R}^{n})}\:=\: \left \{ \sum_{i=1}^{n}\int _{\mathbb{R}^{n}} \left | u_{i}(x,t) \right |^{q}dx\right \}^{\frac{1}{q}}
\end{equation}

\begin{equation}\label{19}  
	\left \| D\uu(\cdot,t) \right \|_{L^{q}(\mathbb{R}^{n})}\:=\: \left \{ \sum_{i,j=1}^{n}\int _{\mathbb{R}^{n}} \left | D_{j}u_{i}(x,t) \right |^{q}dx\right \}^{\frac{1}{q}}
\end{equation}
and, in general,
\begin{equation}
	\left \| D^{m}\uu(\cdot,t) \right \|_{L^{q}(\mathbb{R}^{n})}\:=\: \left \{ \sum_{i,j_{1},...,j_{m}=1}^{n}\int _{\mathbb{R}^{n}} \left | D_{j_{1}}...D_{j_{m}}u_{i}(x,t) \right |^{q}dx\right \}^{\frac{1}{q}}
\end{equation}
if $1\: \leq \: q \:<\: \infty$.
When, $q = \infty$,
\begin{equation}
	\left \| \uu(\cdot,t) \right \|_{L^{\infty}(\mathbb{R}^{n})}\:=\: \max \left \{ \left \| u_{i}(\cdot,t) \right \|_{L^{\infty}(\mathbb{R}^{n})}\:\:\:  :  \:\:\: 1\leq i \leq n  \right \}
\end{equation}
and, for general $m\: \geq \:1$ :
\begin{equation}
	\left \| D^{m}\uu(\cdot,t) \right \|_{L^{\infty}(\mathbb{R}^{n})}\:=\: \max \left \{ \left \| D_{j_{1}}...D_{j_{m}}u_{i}(\cdot,t) \right \|_{L^{\infty}(\mathbb{R}^{n})}\:\:\:  :  \:\:\: 1\leq i,j_{1},...,j_{m} \leq n  \right \}.
\end{equation}

We also defined for simplicity the following norms for $(u,w)$ as usually made in the literature:

\begin{equation}
	\left \| (\uu,\ww) \right \|_{L^{q}(\mathbb{R}^{n})}^{q}\: :=\: \left \| \uu \right \|_{L^{q}(\mathbb{R}^{n})}^{q} \:+\: \left \| \ww \right \|_{L^{q}(\mathbb{R}^{n})}^{q}
\end{equation}
and more generally, for all integers $m\: \geq \: 1$,
\begin{equation}
	\left \| (D^{m}\uu,D^{m}\ww) \right \|_{L^{q}(\mathbb{R}^{n})}^{q}\: :=\: \left \| D^{m}\uu \right \|_{L^{q}(\mathbb{R}^{n})}^{q} \:+\: \left \| D^{m}\ww \right \|_{L^{q}(\mathbb{R}^{n})}^{q}
\end{equation}
for all $1\: \leq q < \infty$ and when $q=\infty$,
\begin{equation}
	\left \| (\uu,\ww) \right \|_{L^{\infty}(\mathbb{R}^{n})}\:=\: \max \left \{ \left \| \uu \right \|_{L^{\infty}(\mathbb{R}^{n})} , \left \| \ww \right \|_{L^{\infty}(\mathbb{R}^{n})} \right \}.
\end{equation}

\section{Some mathematical preliminaries}
Let us recall some basic facts regarding the weak solutions of \eqref{micropolar}. Exactly as in the Navier-Stokes case, it is not known that Leray solutions to the problem \eqref{micropolar} are regular and smooth for all $t>0$. However, it is known that they do behave nicely for all, $t>0$ sufficiently large \cite{Lukaszewicz1999,Leray1934}, say $t>t_{*}$, with
\begin{subequations}\label{regularity_time}
\begin{equation}
	(\uu,\ww)(\cdot,t) \in  C^{\infty }(\mathbb{R}^{3}\times (t_{*},\infty )) 
\end{equation}
and, for each $m \in \mathbb{Z}_{+} $ :
\begin{equation}
	(\uu,\ww)(\cdot,t) \in  C^{0 }( \left [  t_{*},\infty  \right )  , \HH^{m} (\mathbb{R}^{3}) ) 
\end{equation}
\end{subequations}
and such that the elementary ({\it strong}) energy inequality
\begin{align*}
	&\left \| (\uu,\ww)(\cdot,t) \right \|_{L^{2}(\mathbb{R}^{3})}^{2} \:+\:  2\lambda \int_{t_{0}}^{t}\left \| (D\uu,D\ww)(\cdot,\tau ) \right \|_{L^{2}(\mathbb{R}^{3})}^{2}d\tau \\
&+\: 2\kappa \int_{t_{0}}^{t}\left \| \nabla \cdot \ww(\cdot,\tau ) \right \|_{L^{2}(\mathbb{R}^{3})}^{2}d\tau  \:+\: 2\eta  \int_{t_{0}}^{t}\left \| \uu(\cdot,\tau ) \right \|_{L^{\beta +1}(\mathbb{R}^{3})}^{\beta +1}d\tau  \: \leq \: \left \| (\uu,\ww)(\cdot,t_{0}) \right \|_{L^{2}(\mathbb{R}^{3})}^{2}, 
\end{align*}
holds for all $ t >t_0 $ and for a.e. $t_{0} \geq 0$ (including $t_0 = 0$), where $\lambda = \min \left \{ \mu ,\gamma  \right \}$. 

We state now some useful well-known lemmas that are necessary to proof our main results.

\begin{Lemma}\label{GSN}
	For any $ f \in H^2(\RR^3) $, we have
	\begin{align*}
		\left \| f \right \|_{L^{\infty }(\mathbb{R}^{3})}&\left \| \nabla f \right \|_{L^{2}(\mathbb{R}^{3})} 
		\leq C \left \| f \right \|_{L^{2}(\mathbb{R}^{3})}^{\frac{1}{2}} \left \| \nabla f \right \|_{L^{2}(\mathbb{R}^{3})}^{\frac{1}{2}} \left \| \nabla^{2} f \right \|_{L^{2}(\mathbb{R}^{3})}. 
	\end{align*}
\end{Lemma}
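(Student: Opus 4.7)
The plan is to derive the stated inequality by combining two well-known ingredients, both of which can be proved by elementary Fourier analysis. First I would recall (or reprove) the Agmon-type interpolation
\[
\|f\|_{L^\infty(\RR^3)} \;\leq\; C\,\|f\|_{L^2(\RR^3)}^{1/4}\,\|\nabla^2 f\|_{L^2(\RR^3)}^{3/4},
\]
and then use the standard interpolation for the intermediate derivative
\[
\|\nabla f\|_{L^2(\RR^3)} \;\leq\; \|f\|_{L^2(\RR^3)}^{1/2}\,\|\nabla^2 f\|_{L^2(\RR^3)}^{1/2}.
\]
Multiplying the first estimate by $\|\nabla f\|_{L^2}$, writing $\|\nabla f\|_{L^2}=\|\nabla f\|_{L^2}^{1/2}\cdot\|\nabla f\|_{L^2}^{1/2}$ and bounding one of the two half-powers by the second estimate yields, after collecting exponents,
\[
\|f\|_{L^\infty}\|\nabla f\|_{L^2}
\;\leq\; C\,\|f\|_{L^2}^{1/2}\,\|\nabla f\|_{L^2}^{1/2}\,\|\nabla^2 f\|_{L^2},
\]
which is exactly the statement. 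A quick scaling check ($f_\lambda(x)=f(\lambda x)$ gives $\lambda^{-1/2}$ on both sides) confirms the exponents are consistent.

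The second interpolation is immediate: integration by parts combined with Cauchy--Schwarz gives $\|\nabla f\|_{L^2}^2=-\langle f,\Delta f\rangle\le\|f\|_{L^2}\|\Delta f\|_{L^2}\le\|f\|_{L^2}\|\nabla^2 f\|_{L^2}$. The Agmon-type bound I would justify via the inversion formula $|f(x)|\le\int|\hat f(\xi)|\,d\xi$, splitting the integral at a radius $R>0$. On $|\xi|<R$ I apply Cauchy--Schwarz to get a bound by $C\|f\|_{L^2}R^{3/2}$; on $|\xi|\ge R$ I pair $|\hat f|=|\xi|^{-2}\cdot|\xi|^2|\hat f|$ with Cauchy--Schwarz, obtaining $C\|\nabla^2 f\|_{L^2}R^{-1/2}$. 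Optimizing in $R$ (choosing $R^2\sim\|\nabla^2 f\|_{L^2}/\|f\|_{L^2}$) yields the desired Agmon bound with exponents $1/4$ and $3/4$.

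No genuine difficulty is expected here; the only thing to be careful about is the bookkeeping of exponents so that the final inequality comes out with the precise powers $1/2,\,1/2,\,1$ on the right-hand side, and the step of showing the Agmon-type estimate for $f\in H^2(\RR^3)$ (density of Schwartz functions handles the regularity issue). If one prefers, the whole lemma can be obtained in a single shot by applying the general Gagliardo--Nirenberg interpolation inequality in $\RR^3$, but the two-step argument above keeps the proof self-contained and transparent.
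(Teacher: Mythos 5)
Your proof is correct: the Agmon bound $\|f\|_{L^\infty}\leq C\|f\|_{L^2}^{1/4}\|\nabla^2 f\|_{L^2}^{3/4}$ in $\RR^3$ (with the Fourier splitting at radius $R$ and the optimization $R^2\sim\|\nabla^2 f\|_{L^2}/\|f\|_{L^2}$) and the interpolation $\|\nabla f\|_{L^2}^2\leq\|f\|_{L^2}\|\nabla^2 f\|_{L^2}$ combine with the stated exponent bookkeeping to give exactly the claimed inequality, and the scaling check confirms it. The paper states this lemma as a well-known Gagliardo--Nirenberg/Agmon-type interpolation without supplying a proof, so there is nothing to compare against; your self-contained derivation is a standard and valid justification.
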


\begin{Lemma}\label{lema D alfa}
	Let $\uu\in L^{r}(\mathbb{R}^{n})$ and $e^{\nu \Delta \tau}$ the heat Kernel, then
	\begin{align*}
		\left \| D^{\alpha }\left [ e^{\nu \Delta \tau}\uu \right ] \right \|_{L^{2}(\mathbb{R}^{n})} \: \leq \: K(n,m)\left \| \uu \right \|_{L^{r}(\mathbb{R}^{n})}(\nu \tau )^{-\frac{n}{2}\left ( \frac{1}{r}-\frac{1}{2} \right )-\frac{\left | \alpha  \right |}{2}}
	\end{align*}
	for all $\tau >0$ and $\alpha$ (multi-index), $1\: \leq \: r\: \leq \: 2$, $n\: \geq \:1$, and $m\:=\: \left | \alpha  \right |$. 
\end{Lemma}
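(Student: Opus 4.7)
The plan is to exploit the convolution representation $e^{\nu\Delta\tau}\uu = G_{\nu\tau} \ast \uu$, where $G_{\nu\tau}(x) = (4\pi\nu\tau)^{-n/2}\exp(-|x|^2/(4\nu\tau))$ is the Gauss kernel on $\RR^n$, combined with Young's convolution inequality and a simple scaling argument. Since differentiation commutes with convolution, one has $D^{\alpha}[e^{\nu\Delta\tau}\uu] = (D^{\alpha}G_{\nu\tau}) \ast \uu$ componentwise, and Young's inequality in the form $\|f \ast g\|_{L^2} \le \|f\|_{L^q}\|g\|_{L^r}$ with $\tfrac{1}{q} + \tfrac{1}{r} = \tfrac{3}{2}$ reduces the lemma to estimating $\|D^{\alpha}G_{\nu\tau}\|_{L^q}$, where $q = 2r/(3r-2)$ lies in $[1,2]$ for $r \in [1,2]$.

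To control this kernel norm, I would use the self-similar scaling $G_{\nu\tau}(x) = (\nu\tau)^{-n/2}G_1(x/\sqrt{\nu\tau})$. Differentiating $|\alpha|$ times brings out an extra factor $(\nu\tau)^{-|\alpha|/2}$ pointwise, and after the change of variable $y = x/\sqrt{\nu\tau}$ in the $L^q$-integral one arrives at
\[
\|D^{\alpha}G_{\nu\tau}\|_{L^q} \;=\; (\nu\tau)^{-\frac{n}{2}(1 - 1/q) - |\alpha|/2}\,\|D^{\alpha}G_1\|_{L^q}.
\]
Using the Young constraint $1 - 1/q = 1/r - 1/2$, this exponent becomes exactly $-\tfrac{n}{2}(1/r - 1/2) - |\alpha|/2$, matching the statement. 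The constant $K = \|D^{\alpha}G_1\|_{L^q}$ depends on $n$ and $m=|\alpha|$ (and implicitly on $r$) but is finite because $G_1$ and all of its derivatives are Schwartz functions.

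I do not foresee any serious obstacle; the result is a classical heat-semigroup estimate and the only care needed is in the bookkeeping of exponents. The endpoints $r=1$ (whence $q=2$) and $r=2$ (whence $q=1$) are both unproblematic, corresponding respectively to $L^2$-control of $D^{\alpha}G_{\nu\tau}$ and its $L^1$-control via Gaussian integrability. An alternative route going through the Fourier side, applying Plancherel and then Hausdorff--Young on $|\xi|^{m} e^{-\nu\tau|\xi|^2}\hat{\uu}(\xi)$, yields the same decay and the same interpolation exponent with essentially identical bookkeeping, and would be the path I take if one preferred to avoid explicit convolution estimates.
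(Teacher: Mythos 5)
Your argument is correct. The paper states Lemma \ref{lema D alfa} without proof, as one of several ``useful well-known lemmas,'' so there is no in-paper argument to compare against; the route you take --- writing $e^{\nu \Delta \tau}\uu$ as convolution with the Gaussian $G_{\nu\tau}$, commuting $D^{\alpha}$ onto the kernel, applying Young's inequality with $\frac{1}{q}+\frac{1}{r}=\frac{3}{2}$, and extracting the power of $\nu\tau$ from the self-similar scaling of $G_1$ --- is the standard proof, and the exponent bookkeeping checks out, since $1-\frac{1}{q}=\frac{1}{r}-\frac{1}{2}$ turns the kernel norm into exactly $(\nu\tau)^{-\frac{n}{2}\left(\frac{1}{r}-\frac{1}{2}\right)-\frac{|\alpha|}{2}}$. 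The only cosmetic point is that your constant $\|D^{\alpha}G_1\|_{L^q}$ a priori depends on $r$ through $q$, whereas the statement writes $K(n,m)$; since $q\in[1,2]$, log-convexity of the $L^q$ norms bounds it by $\max\left\{\|D^{\alpha}G_1\|_{L^1},\,\|D^{\alpha}G_1\|_{L^2}\right\}$, so a constant depending only on $n$ and $m=|\alpha|$ can indeed be taken, and your Fourier-side alternative via Plancherel and Hausdorff--Young is an equally valid classical route.
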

The following asymptotic result is a key property for the gradient and it is very important to provide general time decay estimates for solutions to the system \eqref{micropolar} and to other diffusive equations (see \cite{MR4546674} for the general approach).    
\begin{Lemma}\label{estimativa grad un medio}
	For $(\uu,\ww)$	Leray solutions of \eqref{micropolar}, one has
	\begin{equation}\label{t un medio grad}
		\lim _{t\rightarrow +\infty }t^{\frac{1}{2}}\left \| (D\uu,D\ww)(\cdot,t) \right \|_{L^{2}(\mathbb{R}^{3})} \:=\: 0 .
	\end{equation}
\end{Lemma}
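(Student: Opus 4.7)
My plan is to combine integrability of the dissipation (inherited from the strong energy inequality together with Theorem~\ref{thm_L2}) with an enstrophy-level differential inequality valid in the smooth regime, and then upgrade subsequential decay to the pointwise limit. First, applying the strong energy inequality stated just after \eqref{regularity_time} on $[s,t]$ with $t_\ast<s<t$, discarding the nonnegative damping and divergence contributions, and letting $t\to\infty$ gives
$$2\,\lambda \int_{s}^{\infty}\|(D\uu,D\ww)(\cdot,\tau)\|_{L^{2}(\RR^{3})}^{2}\,d\tau \;\leq\; \|(\uu,\ww)(\cdot,s)\|_{L^{2}(\RR^{3})}^{2}.$$
By Theorem~\ref{thm_L2}, the right-hand side vanishes as $s\to\infty$, so $\int_{s}^{\infty}\|(D\uu,D\ww)\|_{L^{2}}^{2}\,d\tau=o(1)$. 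Applied on intervals $[t/2,t]$, the mean value theorem then supplies a choice $\tau(t)\in[t/2,t]$ with $\tau(t)\,\|(D\uu,D\ww)(\cdot,\tau(t))\|_{L^{2}}^{2}\le 2\!\int_{t/2}^{t}\|(D\uu,D\ww)\|_{L^{2}}^{2}\,d\sigma\to 0$, so the desired decay holds along a subsequence.

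Second, for $t>t_\ast$ the solutions are smooth by \eqref{regularity_time}, so I would test the first equation of \eqref{micropolar} against $-\Delta\uu$ and the second against $-\Delta\ww$, integrate by parts (using $\nabla\cdot\uu=0$ to eliminate the pressure), estimate the coupling terms by Cauchy--Schwarz against $\|D^{2}\uu\|,\|D^{2}\ww\|$, and handle the convective contributions via Gagliardo--Nirenberg (cf. Lemma~\ref{GSN}). The damping contribution is favorable, since $\int |\uu|^{\beta-1}\uu\cdot(-\Delta\uu)\ge 0$ for every $\beta\ge 1$. The outcome is a differential inequality of the form
$$\frac{d}{dt}\|(D\uu,D\ww)(\cdot,t)\|_{L^{2}}^{2}\;+\;c\,\|(D^{2}\uu,D^{2}\ww)(\cdot,t)\|_{L^{2}}^{2} \;\leq\; C\,\|(D\uu,D\ww)(\cdot,t)\|_{L^{2}}^{6}.$$

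Starting from $\tau(t)$, where $\|(D\uu,D\ww)(\cdot,\tau(t))\|_{L^{2}}^{2}$ is already $o(\tau(t)^{-1})$, I would integrate this inequality forward to $t$. Once $\|(D\uu,D\ww)\|_{L^{2}}^{2}$ is small, the cubic right-hand side is absorbed by the dissipation on the left, and the smallness $\int_{\tau(t)}^{t}\|(D\uu,D\ww)\|_{L^{2}}^{2}\,d\sigma=o(1)$ closes the argument to yield $t\,\|(D\uu,D\ww)(\cdot,t)\|_{L^{2}}^{2}\to 0$. I expect the main obstacle to be precisely this propagation step: to rule out a rapid rebound of the enstrophy between consecutive instants $\tau(t)$, one must use both the super-linear dissipation term $\|(D^{2}\uu,D^{2}\ww)\|_{L^{2}}^{2}$ and the smallness already obtained at $\tau(t)$ in a careful bootstrap; without this, subsequential decay cannot be upgraded to pointwise decay.
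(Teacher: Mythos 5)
Your overall strategy is the same as the paper's: an enstrophy-level differential inequality whose nonlinear term is absorbed by the dissipation once the enstrophy is small, followed by the combination of eventual monotonicity with integrability to upgrade to $t\,\|(D\uu,D\ww)(\cdot,t)\|_{L^2}^2\to 0$. However, there is one genuine logical flaw and one step you leave open. The flaw is circularity: you invoke Theorem \ref{thm_L2} to make $\|(\uu,\ww)(\cdot,s)\|_{L^2}$ small, but in this paper Theorem \ref{thm_L2} is \emph{proved using} Lemma \ref{estimativa grad un medio} (the bound $\|D\ww(\cdot,\tau)\|_{L^2}\le\epsilon\,\tau^{-1/2}$ is used repeatedly in Section 3), so you may not use it here. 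Fortunately the appeal is also unnecessary: the energy inequality \eqref{eqn_energy_inequality} with $s=0$ already gives $\int_0^\infty\|(D\uu,D\ww)\|_{L^2}^2\,d\tau<\infty$, so the tail $\int_{t/2}^{t}\|(D\uu,D\ww)\|_{L^2}^2\,d\sigma\to 0$ for free, and your subsequential smallness survives. You should simply delete the reference to Theorem \ref{thm_L2}.

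The step you flag as the ``main obstacle'' --- preventing a rebound of the enstrophy between the good times $\tau(t)$ --- is exactly where the paper does its work, and it closes by interpolation rather than by a delicate bootstrap: since $\|D\zz\|_{L^2}^2\le\|\zz\|_{L^2}\|D^2\zz\|_{L^2}$ and $\|\zz(\cdot,t)\|_{L^2}\le\|\zz_0\|_{L^2}$, the dissipation dominates the nonlinearity, $c\,\|D^2\zz\|_{L^2}^2\ge c\,\|D\zz\|_{L^2}^4/\|\zz_0\|_{L^2}^2$, so your inequality yields $\frac{d}{dt}Y\le Y^2\bigl(CY-c\,\|\zz_0\|_{L^2}^{-2}\bigr)$ with $Y=\|D\zz\|_{L^2}^2$; hence as soon as $Y$ drops below the threshold $c/(C\|\zz_0\|_{L^2}^2)$ at a single time $t_0$ (guaranteed by integrability of $Y$), it is nonincreasing for all $t\ge t_0$. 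This is precisely the paper's condition $C^2\|\zz_0\|_{L^2}\|D\zz(\cdot,t_0)\|_{L^2}<(\min\{\mu,\gamma\})^2$ together with its continuity argument. Once monotonicity is in hand, $\tfrac{t}{2}\,Y(t)\le\int_{t/2}^{t}Y(\sigma)\,d\sigma\to 0$ finishes the proof, as you indicate. So the proposal is salvageable and essentially reproduces the paper's argument, but as written it rests on a circular citation and defers the decisive absorption step instead of carrying it out.
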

\begin{proof}
	The next argument is adapted from \cite{MR1994780}. Define, for simplicity, $\zz(\cdot,t) := (\uu,\ww)(\cdot,t)$ and $D^{m}\zz =(D^{m}\uu,D^{m}\ww)$ , for each $m \geq 0$ integer. In order to show \eqref{t un medio grad}, we used \eqref{micropolar} to get, after a few computations,
	\begin{equation}\label{desi gsn}
		\begin{split}
			&\left \| (D\uu,D\ww)(\cdot,t) \right \|_{L^{2}(\mathbb{R}^{3})}^{2} \:+\: 2 \min \left \{ \mu,\gamma \right \} \int_{t_{0}}^{t}\left \| (D^{2}\uu,D^{2}\ww)(\cdot,\tau) \right \|_{L^{2}(\mathbb{R}^{3})}^{2}d\tau \\
			&\: \leq \: \left \| (D\uu,D\ww)(\cdot,t_{0}) \right \|_{L^{2}(\mathbb{R}^{3})}^{2}\\
			& \quad + \: C\: \int_{t_{0}}^{t} \left \| ( \uu, \ww) \right \|_{L^{2}(\mathbb{R}^{3})}^{\frac{1}{2}} \left \| (\nabla \uu,\nabla \ww) \right \|_{L^{2}(\mathbb{R}^{3})}^{\frac{1}{2}} \left \| (\nabla^{2} \uu,\nabla^{2} \ww) \right \|_{L^{2}(\mathbb{R}^{3})}^{2}d \tau ,
		\end{split}
	\end{equation}
	where we have used a Lemma \ref{GSN}. By \eqref{eqn_energy_inequality}, we can choose $t_{0} \geq t_{*}$ large enough such that
	\begin{equation*}
		C^{2} \left \| ( \uu_{0}, \ww_{0}) \right \|_{L^{2}(\mathbb{R}^{3})} \left \| (D \uu, D \ww)(\cdot,t_{0}) \right \|_{L^{2}(\mathbb{R}^{3})} < \left ( \min \left \{ \mu,\gamma \right \} \right )^{2},
	\end{equation*}
	so that \eqref{desi gsn} gives $\left \| (D \uu, D \ww)(\cdot,t) \right \|_{L^{2}(\mathbb{R}^{3})} \leq \left \| (D \uu, D \ww)(\cdot,t_{0}) \right \|_{L^{2}(\mathbb{R}^{3})}$ for all t near $t_{0}$ by continuity. Actually, with this choice, it follows from \eqref{desi gsn} again that
	\begin{equation*}
		C^{2} \left \| ( \uu_{0}, \ww_{0}) \right \|_{L^{2}(\mathbb{R}^{3})} \left \| (D \uu, D \ww)(\cdot,s) \right \|_{L^{2}(\mathbb{R}^{3})} < \left ( \min \left \{ \mu,\gamma \right \} \right )^{2}, \quad \forall s \geq t_{0}.
	\end{equation*}
	Recalling \eqref{desi gsn}, this implies that
	\begin{equation*}
		\left \| (D \uu, D \ww)(\cdot,t) \right \|_{L^{2}(\mathbb{R}^{3})} \leq \left \| (D \uu, D \ww)(\cdot,t_{0}) \right \|_{L^{2}(\mathbb{R}^{3})} ,
	\end{equation*}
	for all $t \geq t_{0}$. Because a monotonic function $f \in C^{0}(a,\infty) \cap L^{1}(a,\infty)$ has to satisfy $f(t)= O(1/t)$ as $t \rightarrow \infty$, we have
	\begin{equation*}
		\lim _{t\rightarrow \infty }t \left \| (D \uu, D \ww)(\cdot,t) \right \|_{L^{2}(\mathbb{R}^{3})}^{2} \:=\: 0 .
	\end{equation*}

\end{proof}

An important direct consequence of lemma \ref{estimativa grad un medio} is the following proposition.
\begin{Proposition}\label{proposition} 
	Let $(\uu,\ww)(\cdot, t)$ be any Leray  solution of problem \eqref{micropolar} with $\left \| (\uu,\ww) \right \|_{L^{2}(\mathbb{R}^{3})} = O(t^{-\alpha }) $ , for some $\alpha \geq 0$. Then, we have
	\begin{equation}\label{est grad}
		\left \| (\nabla \uu,\nabla \ww) \right \|_{L^{2}(\mathbb{R}^{3})} = O(t^{-\alpha - \frac{1}{2} }) .
	\end{equation}
\end{Proposition}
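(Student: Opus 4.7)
The plan is to combine the eventual monotonicity of $\|(D\uu, D\ww)(\cdot,\tau)\|_{L^2}^2$, which was already established in the proof of Lemma \ref{estimativa grad un medio}, with the energy inequality \eqref{eqn_energy_inequality} applied over a dyadic window $[t/2, t]$. Recall that in that earlier proof one exhibits a time $t_0 \geq t_*$ beyond which the map $\tau \mapsto \|(D\uu, D\ww)(\cdot,\tau)\|_{L^2}^2$ is nonincreasing.

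First, I would fix $t$ large enough, say $t \geq 2t_0$, so that the entire window $[t/2, t]$ lies inside the monotonicity regime. Then the pointwise bound
\[
\|(D\uu, D\ww)(\cdot, t)\|_{L^2}^2 \;\leq\; \|(D\uu, D\ww)(\cdot, \tau)\|_{L^2}^2, \qquad \tau \in [t/2, t],
\]
integrated in $\tau$ over $[t/2, t]$ immediately yields
\[
\frac{t}{2}\,\|(D\uu, D\ww)(\cdot, t)\|_{L^2}^2 \;\leq\; \int_{t/2}^{t} \|(D\uu, D\ww)(\cdot, \tau)\|_{L^2}^2 \, d\tau.
\]

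Next, applying \eqref{eqn_energy_inequality} with initial instant $t/2$ (permissible in view of the smoothness stated in \eqref{regularity_time} once $t/2 \geq t_*$) gives
\[
2\lambda \int_{t/2}^{t} \|(D\uu, D\ww)(\cdot, \tau)\|_{L^2}^2 \, d\tau \;\leq\; \|(\uu, \ww)(\cdot, t/2)\|_{L^2}^2,
\]
where $\lambda = \min\{\mu, \gamma\}$. The hypothesis $\|(\uu, \ww)(\cdot, t)\|_{L^2} = O(t^{-\alpha})$ bounds the right-hand side by $C\, t^{-2\alpha}$. Chaining the two displays produces $\|(D\uu, D\ww)(\cdot, t)\|_{L^2}^2 \leq C\, t^{-2\alpha - 1}$, which is precisely \eqref{est grad}.

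I do not anticipate any serious obstacle. The argument is essentially a dyadic time-averaging trick that promotes the $L^2_t$ bound on the gradient (given by the energy inequality) into a pointwise-in-$t$ decay rate, once the gradient is known to be eventually monotone. The only care needed is to ensure that the interval $[t/2, t]$ lies inside the regime where the monotonicity derived in Lemma \ref{estimativa grad un medio} applies, which explains the threshold $t \geq 2t_0$; the positive damping contribution $2\eta \int \|\uu\|_{L^{\beta+1}}^{\beta+1}\,d\tau$ present in the strong form of the energy inequality only helps and can simply be discarded for this estimate.
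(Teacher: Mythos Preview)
Your argument is correct and takes a genuinely different route from the paper's proof. The paper proceeds via weighted energy estimates: it first multiplies the $L^2$-level energy identity by $(\tau-t_0)^{\widetilde{\gamma}}$ to control $\int_{t_0}^{t}(\tau-t_0)^{\widetilde{\gamma}}\|D\zz\|_{L^2}^2\,d\tau$, then repeats the trick at the $\dot H^1$ level (using Lemma~\ref{GSN} to absorb the nonlinear terms and Lemma~\ref{estimativa grad un medio} to make the resulting coefficient small), finally arriving at $(t-t_0)^{\widetilde{\gamma}+1}\|D\zz\|_{L^2}^2\leq C(t-t_0)^{-2\alpha+\widetilde{\gamma}}$. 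You instead exploit the eventual \emph{monotonicity} of $\tau\mapsto\|D\zz(\cdot,\tau)\|_{L^2}^2$ already contained in the proof of Lemma~\ref{estimativa grad un medio}, combine it with the plain energy inequality on the dyadic window $[t/2,t]$, and read off the pointwise rate directly. Your approach is shorter and avoids the second weighted differential inequality altogether; the paper's approach has the side benefit of simultaneously producing the weighted $\int(\tau-t_0)^{\widetilde{\gamma}+1}\|D^2\zz\|_{L^2}^2\,d\tau$ bound, though this is not used later. One minor remark: your appeal to the energy inequality at the exact time $t/2$ (rather than almost every time) is indeed justified by the smoothness in \eqref{regularity_time}, as you note.
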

\begin{proof}
Define, for simplicity, $\zz(\cdot,t) := (\uu,\ww)(\cdot,t)$ and $D^{m}\zz =(D^{m}\uu,D^{m}\ww)$ , for each $m \geq 0$ integer. In order to show \eqref{est grad}, we observe, after a few computations, that
\begin{equation}\label{1 estimativa}
	\begin{split}
		&(t-t_{0})^{\widetilde{\gamma }} \left \| \zz(\cdot,t) \right \|_{L^{2}(\mathbb{R}^{3})}^{2} + 2\lambda \int_{t_{0}}^{t}(\tau -t_{0})^{\widetilde{\gamma }}\left \| D\zz(\cdot,\tau ) \right \|_{L^{2}(\mathbb{R}^{3})}^{2}d\tau \\
		&\leq \widetilde{\gamma } \int_{t_{0}}^{t}(\tau -t_{0})^{\widetilde{\gamma }-1} \left \| \zz (\cdot,\tau ) \right \|_{L^{2}(\mathbb{R}^{3})}^{2}d\tau \\
		&\leq C \int_{t_{0}}^{t}(\tau -t_{0})^{\widetilde{\gamma }-1} \tau^{-2\alpha }d\tau \\
		&\leq C (t -t_{0})^{-2\alpha + \widetilde{\gamma }} ,
	\end{split}
\end{equation}
	for all $\widetilde{\gamma } > 2 \alpha $. By using Lemma \ref{GSN}, we get
\begin{equation}\label{2 estimativa}
	\begin{split}
		&(t-t_{0})^{\widetilde{\gamma }+1} \left \| D\zz(\cdot,t) \right \|_{L^{2}(\mathbb{R}^{3})}^{2} + 2\lambda \int_{t_{0}}^{t}(\tau -t_{0})^{\widetilde{\gamma }+1}\left \|D^{2}\zz(\cdot,\tau ) \right \|_{L^{2}(\mathbb{R}^{3})}^{2}d\tau \\
		&\leq (\widetilde{\gamma }+1) \int_{t_{0}}^{t}(\tau -t_{0})^{\widetilde{\gamma }} \left \| D\zz(\cdot,\tau ) \right \|_{L^{2}(\mathbb{R}^{3})}^{2}d\tau\\
		&\; \; \; + C \int_{t_{0}}^{t}(\tau -t_{0})^{\widetilde{\gamma }+1}  \left \| \zz(\cdot,\tau ) \right \|_{L^{2}(\mathbb{R}^{3})}^{\frac{1}{2}}  \left \| D\zz(\cdot,\tau ) \right \|_{L^{2}(\mathbb{R}^{3})}^{\frac{1}{2}}  \left \| D^{2}\zz(\cdot,\tau ) \right \|_{L^{2}(\mathbb{R}^{3})}^{2} d\tau\\
		&\leq (\widetilde{\gamma }+1) \int_{t_{0}}^{t} (\tau -t_{0})^{\widetilde{\gamma }} \left \| D\zz(\cdot,\tau ) \right \|_{L^{2}(\mathbb{R}^{3})}^{2}d\tau\\
		&\; \; \; + C \int_{t_{0}}^{t}(\tau -t_{0})^{\widetilde{\gamma }+1} \left \| D\zz(\cdot,\tau ) \right \|_{L^{2}(\mathbb{R}^{3})}^{\frac{1}{2}}\left \| D^{2}\zz(\cdot,\tau ) \right \|_{L^{2}(\mathbb{R}^{3})}^{2}d\tau
        \end{split}
\end{equation}	
	
	For some $C>0$. By Lemma \ref{estimativa grad un medio} , there exists a $t_{0}$ sufficiently large such that ,  
	\begin{equation}\label{3 estimativa}
	C^{2}\left \| (D\uu,D\ww)(\cdot,t) \right \|_{L^{2}(\mathbb{R}^{3})}\leq \lambda ^{2} \quad  ;   \quad{\text{for all}} \quad t \geq t_{0} ,
	\end{equation}
combining \eqref{1 estimativa}, \eqref{2 estimativa} and \eqref{3 estimativa}, we have
	\begin{equation}\label{4 estimativa}
		\begin{split}
		(t-t_{0})^{\widetilde{\gamma }+1} \left \| D\zz(\cdot,t) \right \|_{L^{2}(\mathbb{R}^{3})}^{2}&+\lambda \int_{t_{0}}^{t}(\tau -t_{0})^{\widetilde{\gamma }+1} \left \| D^{2}\zz(\cdot,\tau ) \right \|_{L^{2}(\mathbb{R}^{3})}^{2}d\tau \\
		&\leq (\widetilde{\gamma }+1)  \int_{t_{0}}^{t} (\tau -t_{0})^{\widetilde{\gamma }} \left \| D\zz(\cdot,\tau ) \right \|_{L^{2}(\mathbb{R}^{3})}^{2}d\tau \\
		&\leq \widetilde{C } (t-t_{0})^{-2\alpha +\widetilde{\gamma } }
		\end{split}
	\end{equation}
	Applying \eqref{4 estimativa}, we get
	\begin{align*}
		\left \| (D\uu,D\ww)(\cdot,t) \right \|_{L^{2}(\mathbb{R}^{3})}
		\leq \widetilde{C } t^{-\alpha - \frac{1}{2} } .
	\end{align*}
Thus we complete the proof of \eqref{est grad}.
	
\end{proof}

\subsection{Linear operator related to \eqref{micropolar}} 
In this section we described some properties regarding the linear problem associated to \eqref{micropolar}. All the results presented here are useful to provide the $ \dot{H}^m $ estimates established in the section 4. 

We first consider the linear system related to \eqref{micropolar}.

\begin{subequations}\label{eqn:linear-system}
	\begin{equation}\label{L_uu_equation}
		\mbox{\boldmath $\bar{u}$}_t
		\;=\;
		(\:\!\mu + \chi\:\!) \, \Delta \mbox{\boldmath $\bar{u}$}
		\,+\,2
		\,\chi \, \nabla \times {\bf \bar{w}},
	\end{equation}
	\begin{equation}\label{L_ww_equation}
		{\bf  \bar{w}}_t
		\;\!=\,
		\gamma \;\! \, \Delta {\bf {\bar{\ww}}}
		\,+\,\kappa\, \nabla \:\!(\:\!\nabla \cdot\;{\bf \bar{w}} \:\!)
		\,+ \,2\,\chi \, \nabla \times\, \mbox{\boldmath $\bar{u}$}
		\,-\, 4 \, \chi \, {\bf \bar{w}},
	\end{equation}
	\begin{equation}
		\nabla \cdot \mbox{\boldmath $\bar{u}$}(\cdot,t)  = 0.
	\end{equation}
\end{subequations}
We observe that the linear system 
\eqref{eqn:linear-system} has a solution $ \bar{\zz}(\cdot,t;t_0) \in C^0([t_0,\infty), L^2(\RR^3)) $, for each initial time $ t_0 \geq 0 $, that is,  
$\bar{\zz}(\cdot,t;t_0) = e^{\mathbb{A} \:\!(t-t_0)}  \zz(\cdot,t_0) 
$, where 
$$
\mathbb{A}\zz:=\left[
\begin{array}{ccc}
	(\mu+\chi)\Delta & 2\,\chi \;\!\nabla\wedge  \\
	2\,\chi \;\!\nabla\wedge &  \mathbb{L} - 4\chi\,\mbox{Id}_{3\times 3} 
	\\
\end{array}
\right]\!\!\;\!
\left[
\begin{array}{ccc}
	\uu \\
	\ww  
	\\
\end{array}
\right]\!\!\;\!,$$
and $ \mathbb{L\ww}:= \gamma\,\Delta \ww+ \kappa \nabla(\nabla\cdot \ww)  $ is the Lamé operator. 
In the frequency space, after taking the Fourier transform of system (\ref{eqn:linear-system}), one obtains

\begin{displaymath}
	\partial_t \widehat{\bar{\zz}} = M(\xi) \widehat{\bar{\zz}},
\end{displaymath}
where $M = M(\xi)$ is the matrix of symbols

\begin{equation}
	\label{eqn:matrix-symbols}
	M = \left( \begin{array}{ccc} - (\mu + \chi) |\xi|^2 Id_{3 \times 3} & i \chi R_3(\xi) 
		\\  i \chi    R_3 (\xi) & - ( \gamma |\xi|^2 + 2 \chi)  Id_{3 \times 3} -  \kappa\,\xi_i \xi_j  
	\end{array} \right).
\end{equation}
Here, $Id_{3 \times 3}$ denotes the $3\times 3$ identity matrix and $ i  R_3 (\xi)$ denotes the rotation matrix
\begin{displaymath}
	i  R_3 (\xi) =  i  \left( \begin{array}{ccc} 0 & \xi_3 & - \xi_2 \\ - \xi_3 & 0 & \xi_1 \\ \xi_2 & - \xi_1 & 0 \end{array} \right).
\end{displaymath}

In \cite{NichePerusato2020} (see lemma (2.5), p. 48), the authors proved the following estimate for the eigenvalues of $ M(\xi) $: 
\begin{displaymath}
	\lambda _{max} (M) \leq - C |\xi|^2, \qquad C =C (\mu, \chi, \gamma) > 0 , 
\end{displaymath} 
as long as $32 \chi (\mu + \chi + \gamma) > 1$. As a consequence, we immediately have the following upper bound for the semigroup $\bigl(e^{\mathbb{A}t}\bigr)_{t \geq 0}$ associated to \eqref{eqn:linear-system}.     
\begin{Lemma}\label{lema-linear_opertaror}
	Let $ \mathcal{G} \in L^2_\sigma(\mathbb{R}^3) \times {L}^2(\mathbb{R}^3) $. If  $32 \chi (\mu + \chi + \gamma) > 1$, then \footnote{Recalling that $ e^{\Delta \tau} $ denotes the heat semigroup.}
	\begin{equation}\label{eqn:linear_operator_eq}
		\|e^{\mathbb{A}t} \:\!\mathcal{G} \|_{L^2} \leq \left\|e^{c\,\Delta t} \:\!\mathcal{G} \right\|_{L^2},
	\end{equation} 
	for all $t \geq 0$, where $e^{\mathbb{A}t}$ is the semigroup generated by the linear operator $\mathbb{A}$ above. 
\end{Lemma}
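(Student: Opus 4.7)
The natural strategy is to pass to Fourier space. By Plancherel, and because $\widehat{e^{\mathbb{A}t}\mathcal{G}}(\xi) = e^{t\,M(\xi)}\widehat{\mathcal{G}}(\xi)$ with $M(\xi)$ the symbol matrix in \eqref{eqn:matrix-symbols}, the inequality reduces to the pointwise bound
\begin{equation*}
    \bigl|e^{t\,M(\xi)}\widehat{\mathcal{G}}(\xi)\bigr|_{\mathbb{C}^6} \;\leq\; e^{-c\,t\,|\xi|^2}\,\bigl|\widehat{\mathcal{G}}(\xi)\bigr|_{\mathbb{C}^6},
\end{equation*}
for a.e.\ $\xi \in \RR^3$, with the same constant $c = C(\mu,\chi,\gamma) > 0$ that appears in the heat semigroup $e^{c\,\Delta t}$ on the right-hand side of \eqref{eqn:linear_operator_eq}.

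To get such a pointwise estimate I would first verify that $M(\xi)$ is Hermitian: the diagonal blocks $-(\mu+\chi)|\xi|^2\,\mathrm{Id}_{3\times 3}$ and $-(\gamma|\xi|^2+2\chi)\,\mathrm{Id}_{3\times 3}-\kappa\,\xi_i\xi_j$ are clearly self-adjoint (the second block is real symmetric since $(\xi_i\xi_j)^T = \xi_j\xi_i$), while the off-diagonal block $i\chi R_3(\xi)$ satisfies $(i\chi R_3(\xi))^{*} = -i\chi R_3(\xi)^{T} = i\chi R_3(\xi)$ because $R_3(\xi)$ is real antisymmetric. Consequently, the spectral theorem for Hermitian matrices gives the operator-norm identity
\begin{equation*}
    \bigl\|e^{t\,M(\xi)}\bigr\|_{\mathrm{op}} \;=\; e^{\,t\,\lambda_{\max}(M(\xi))},\qquad t\geq 0.
\end{equation*}

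At this stage I would invoke the eigenvalue estimate recalled from \cite{NichePerusato2020}, namely $\lambda_{\max}(M(\xi)) \leq -c\,|\xi|^2$ for some $c = c(\mu,\chi,\gamma) > 0$, which is valid precisely under the hypothesis $32\chi(\mu+\chi+\gamma)>1$. Combining with the previous display yields $|e^{t\,M(\xi)}\widehat{\mathcal{G}}(\xi)| \leq e^{-c\,t\,|\xi|^2}|\widehat{\mathcal{G}}(\xi)|$. Squaring, integrating over $\xi\in \RR^3$ and using that $\widehat{e^{c\Delta t}\mathcal{G}}(\xi) = e^{-c\,t\,|\xi|^2}\widehat{\mathcal{G}}(\xi)$ together with Plancherel, I recover exactly \eqref{eqn:linear_operator_eq}.

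The only conceptual step that requires care is the verification of the Hermitian character of $M(\xi)$, because the off-diagonal block carries the imaginary factor $i$ coming from the Fourier symbol of the curl. Once Hermiticity is established, no matrix-exponential machinery beyond the spectral theorem is needed; the eigenvalue bound imported from \cite{NichePerusato2020} does the rest. I do not foresee any obstacle involving the divergence-free condition on the first component of $\mathcal{G}$, since the inequality is derived pointwise in frequency and does not require projecting onto solenoidal fields.
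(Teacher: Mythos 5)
Your proposal is correct and follows essentially the same route as the paper: Plancherel plus the eigenvalue bound $\lambda_{\max}(M(\xi))\leq -C|\xi|^2$ from \cite{NichePerusato2020}. The one thing you add — verifying that $M(\xi)$ is Hermitian so that $\|e^{tM(\xi)}\|_{\mathrm{op}}=e^{t\lambda_{\max}(M(\xi))}$ — is a detail the paper's one-line proof leaves implicit, and it is in fact needed (for a non-normal matrix the spectral bound alone would not control the norm of the exponential), so it is a worthwhile addition rather than a deviation.
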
 
	\begin{proof}
		Using the Plancherel`s identity, we have $\|e^{\mathbb{A}t} \:\!\mathcal{G} \|_{L^2} \leq 
		\|e^{-C\,|\xi|^2} \hat{\mathcal{G}}\| = \|e^{c\,\Delta t} \:\!\mathcal{G} \|_{L^2}  $
	\end{proof}
As a byproduct of our investigation, we finish this section by showing a fundamental result regarding the $ \dot{H}^m $ estimates for the linear system \eqref{eqn:linear-system} associated to \eqref{micropolar}. This should be compared to the so-called inverse Wiegner's theorem for solutions to the Navier-Stokes equations, see \cite{Skalak2014}. The following estimate is needed to obtain Theorem \ref{thm_inequalities} and also might be of independent interest.   
\begin{Theorem}\label{inverse-wiegner}
	Let $ \lambda_{0}(\alpha) < \infty $, for some $ 0 < \alpha <  3/4 $. Then, one has $ \limsup_{t \to +\infty } t^{ \alpha+ \frac{m}{2}} \|e^{\mathbb{A}\,t} \zz_0 \|_{\dot{H}^m}  < C_{m,\alpha}\,\lambda_0(\alpha).$
\end{Theorem}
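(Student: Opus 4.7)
The plan is to reduce the $\dot{H}^m$ estimate to an $L^2$ estimate for the linear semigroup, and then transfer the decay of the nonlinear solution $\zz(\cdot,t)=(\uu,\ww)(\cdot,t)$ to the linear one $e^{\mathbb{A}t}\zz_0$ via Duhamel. Repeating the Plancherel argument used in the proof of Lemma \ref{lema-linear_opertaror}, but now with the weight $|\xi|^{2m}$, and combining the spectral bound $\lambda_{\max}(M(\xi))\le -C|\xi|^{2}$ with the elementary pointwise estimate $|\xi|^{2m}e^{-C|\xi|^{2}\sigma}\le K_m\,\sigma^{-m}$, one obtains the smoothing inequality
\begin{equation*}
\|D^{m}e^{\mathbb{A}\sigma}\phi\|_{L^{2}}\;\le\;K_{m}\,\sigma^{-m/2}\,\|\phi\|_{L^{2}},\qquad \sigma>0,
\end{equation*}
where $K_m$ depends on $m,\mu,\chi,\gamma$ and can be tracked so as to produce the factor $(\mu+\chi)^{-(m+1)/2}$ appearing in $C_{\alpha,m}$. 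Applying the semigroup identity $e^{\mathbb{A}t}=e^{\mathbb{A}t/2}\circ e^{\mathbb{A}t/2}$ with $\sigma=t/2$ and $\phi=e^{\mathbb{A}t/2}\zz_{0}$ then reduces the claim to establishing
\begin{equation*}
\limsup_{\sigma\to\infty}\;\sigma^{\alpha}\,\|e^{\mathbb{A}\sigma}\zz_{0}\|_{L^{2}}\;\le\;\lambda_{0}(\alpha).
\end{equation*}

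For this preliminary $L^{2}$ bound I would use Duhamel's formula for system \eqref{micropolar}:
\begin{equation*}
e^{\mathbb{A}t}\zz_{0}\;=\;\zz(\cdot,t)\;-\;\int_{0}^{t}e^{\mathbb{A}(t-s)}\,F(\zz(s))\,ds,
\end{equation*}
where $F$ collects the convective, pressure and damping nonlinearities for $\uu$, and the convective term $-\uu\cdot\nabla\ww$ for $\ww$. Taking $L^{2}$ norms and using the triangle inequality, the first summand contributes at most $\lambda_{0}(\alpha)\,t^{-\alpha}+o(t^{-\alpha})$: the $\uu$ part by the definition of $\lambda_{0}(\alpha)$, and the $\ww$ part by Theorem \ref{thm_L2}, which gives $\|\ww(\cdot,t)\|_{L^{2}}=o(t^{-1/2})=o(t^{-\alpha})$ whenever $\alpha<3/4$. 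The integral term is precisely the content of the Claim deferred to the Appendix; the strategy there would be to invoke Lemma \ref{lema-linear_opertaror} to pass from $e^{\mathbb{A}(t-s)}$ to the heat semigroup, to write each convective nonlinearity in divergence form, and to exploit the damping-induced integrability $\uu\in L^{\beta+1}((0,\infty);L^{\beta+1}(\RR^{3}))$ together with the strong energy inequality and Proposition \ref{proposition}, so as to show that the integral is $o(t^{-\alpha})$ in the admissible range of $\beta$.

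Combining the two steps yields
\begin{equation*}
t^{\alpha+m/2}\,\|D^{m}e^{\mathbb{A}t}\zz_{0}\|_{L^{2}}\;\le\;2^{\alpha+m/2}\,K_{m}\,\bigl(\lambda_{0}(\alpha)+o(1)\bigr),
\end{equation*}
which gives the advertised bound on the $\limsup$ with constant $C_{m,\alpha}$. The main obstacle is the Duhamel estimate in the middle step: the upper threshold $\alpha<3/4$ coincides with the heat-kernel decay rate in $\RR^{3}$, so the integral has to be split into a near-diagonal piece $s\simeq t$, where derivative smoothing is singular but $\|\uu(\cdot,s)\|_{L^{2}}^{2}$ controls the bilinear form, and an off-diagonal piece, where the integrability gained from the damping term compensates. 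Matching these two regimes is what forces the lower bound on $\beta$ in terms of $\alpha$, and is the technical heart of the argument, which is why its proof is placed in the Appendix.
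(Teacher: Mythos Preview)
Your reduction to an $L^{2}$ bound via the semigroup smoothing estimate and the split $e^{\mathbb{A}t}=e^{\mathbb{A}t/2}\circ e^{\mathbb{A}t/2}$ is correct and matches how the paper passes from the $L^{2}$ statement to $\dot H^{m}$. Where your proposal departs from the paper is in the $L^{2}$ step itself: the paper does \emph{not} try to show directly that the Duhamel remainder is $o(t^{-\alpha})$. Instead it runs a bootstrap, first invoking Theorem~\ref{thm_L2} to get $\lambda_{0}(0)=0$, then applying Claim~\ref{Claim} at $\alpha'=0$ (so all constants vanish) to conclude the remainder is $o(t^{-1/4})$; this yields $\|e^{\mathbb{A}(t-t_{0})}\zz(\cdot,t_{0})\|=O(t^{-\min(\alpha,1/4)})$, and the argument is iterated through the ranges $\alpha'\in\{1/4,1/2\}$, each time exploiting $\lambda_{0}(\alpha')=0$ for $\alpha'<\alpha$, until the target exponent is reached. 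Your one--shot route is cleaner, but it requires a sharper remainder estimate than Claim~\ref{Claim} actually delivers: for $1/2\le\alpha<3/4$ the Claim has $\hat\beta=0$, so it only gives $\limsup t^{\alpha}\|\Ez\|\le c_{3}2^{\alpha}\lambda_{0}(\alpha)^{1/2}+\tilde c_{3}\lambda_{0}(\alpha)^{\beta}$, which is $O(t^{-\alpha})$ rather than $o(t^{-\alpha})$. Feeding this into your argument produces a $\lambda_{0}(\alpha)^{1/2}$ contribution that cannot be absorbed into $C_{m,\alpha}\lambda_{0}(\alpha)$. The good news is that the tools you list (heat--kernel bounds, divergence form, Proposition~\ref{proposition}) do yield the stronger $o(t^{-\alpha})$ estimate---the convective piece is $O(t^{-2\alpha-1/4})$ and the damping piece decays strictly faster than $t^{-\alpha}$ precisely under the threshold $\beta>(4\alpha+7)/(4\alpha+3)$---so your approach can be salvaged, but you must prove this yourself rather than appeal to the Claim as stated.

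Two smaller points. First, the line ``$\|\ww(\cdot,t)\|_{L^{2}}=o(t^{-1/2})=o(t^{-\alpha})$ whenever $\alpha<3/4$'' is false for $\alpha\in(1/2,3/4)$; you need the sharper bound $\|\ww(\cdot,t)\|_{L^{2}}=O(t^{-\alpha-1/2})$ coming from Corollary~\ref{corollary w} (equivalently, rerun the Duhamel argument for $\ww$ alone after Proposition~\ref{proposition}). Second, writing Duhamel from $t=0$ presupposes regularity that Leray solutions only enjoy for $t>t_{*}$; the paper starts from $t_{0}>t_{*}$ and uses Lemma~\ref{thm_linear_aprox} to transfer the conclusion back to $e^{\mathbb{A}t}\zz_{0}$, and you should do the same.
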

\begin{proof}
	We start with the following claim.
\begin{Claim}\label{Claim} Given any initial time $ t_0 \geq 0 $, one has
	
	\begin{equation}\label{eqn: claim_difference}
			 \limsup_{t \to \infty}\, t^{\alpha \,+\, \hat{\beta}}\, \|	\zz(\cdot,t) - e^{\mathbb{A}(\,t-t_0)} \zz(\cdot,t_0)\|_{L^2}   
			\leq C_{\,\beta,\,\hat{\beta},\, \lambda_0(\alpha)\,}, 
\end{equation}
where 
\begin{eqnarray*}
			C_{\,\beta,\,\hat{\beta},\, \lambda_0(\alpha)\,} =  \begin{cases}
					c_1\,2^{\alpha + \hat{\beta}}\,\lambda_0(\alpha)^2 + \tilde{c}_1\,\lambda_0(\alpha)^\beta, \quad 0 \leq \alpha <1/4  \\ 
					 c_2\,2^{\alpha + \hat{\beta}}\,\lambda_0(\alpha) + \tilde{c}_2\,\lambda_0(\alpha)^\beta, \quad 1/4 \leq \alpha <1/2\\ 
					c_3\,2^{\alpha + \hat{\beta}}\,\lambda_0(\alpha)^{1/2} + \tilde{c}_3\,\lambda_0(\alpha)^\beta, \quad 1/2 \leq \alpha <3/4, 
			\end{cases}   		
	\end{eqnarray*} 
\begin{eqnarray*}
	\hat{\beta} =  \begin{cases}
		\alpha +1/4, \quad 0 \leq \alpha <1/4 \text{\,\,and\,\,} \beta \geq \frac{8\alpha + 8}{4 \alpha + 3} \\ 
		1/4, \quad 1/4 \leq \alpha <1/2 \text{\,\,and\,\,} \beta \geq \frac{4\alpha + 8}{4 \alpha + 3}\\ 
		0, \quad 1/2 \leq \alpha <3/4 \text{\,\,and\,\,} \beta \geq \frac{4\alpha + 7}{4 \alpha + 3}, 
	\end{cases}   		
\end{eqnarray*}
and $ c_i $,\,$\tilde{c}_i $ are constants.
\end{Claim}
Before proving the claim above\footnote{For the proof of claim, see the appendix section.}, we  obtain the Theorem \ref{inverse-wiegner} by using the following argument (adapted from \cite{BPZ}). By Theorem \ref{thm_L2} we have that $ \lambda_{0}(0) = 0 $, so we can apply the above estimate \eqref{eqn: claim_difference} for $ \alpha = 0 $ which immediately  leads us to $ t^{1/4} \|\zz(\cdot,t) - e^{\mathbb{A}(t-t_0)} \zz(\cdot, t_0) \| \to 0 $ as $ t \to \infty $. Now, we just have to notice that 
\begin{equation}\label{step1}
	\begin{split}
		\|e^{\mathbb{A}(t-t_0)} \zz(\cdot, t_0) \| \leq \|\zz(\cdot,t) \| + \|\zz(\cdot,t) - e^{\mathbb{A}(t-t_0)} \zz(\cdot, t_0) \| \\ 
		= O(t^{-\alpha})  + O(t^{-1/4}) = O(t^{-\gamma}), 
	\end{split}
\end{equation}
where $ \gamma = \min\{\alpha, 1/4 \} $. Therefore, if $ \alpha \leq 1/4 $, then Theorem \ref{inverse-wiegner} is already shown. The remaining cases for $\alpha $ will be considered as follows. When $ 1/4 <\alpha \leq 1/2 $,  by \eqref{step1}, we have $ \|\zz(\cdot, t) \| = O(t^{-1/4}) $, so that, by \eqref{eqn: claim_difference}, we have $\|\zz(\cdot,t) - e^{\mathbb{A}(t-t_0)} \zz(\cdot, t_0) \| = O(t^{-1/2})$. As before, we get    
\begin{equation*}
	\begin{split}
		\|e^{\mathbb{A}(t-t_0)} \zz(\cdot, t_0) \|_{L^2} \leq \|\zz(\cdot,t) \|_{L^2} + \|\zz(\cdot,t) - e^{\mathbb{A}(t-t_0)} \zz(\cdot, t_0) \|_{L^2} \\ 
		= O(t^{-\alpha})  + O(t^{-1/2}) = O(t^{-\alpha})
	\end{split}
\end{equation*}
and we are done in this case since $ 1/4 <\alpha \leq 1/2 $. Similarly, we can complete the proof for the case $ \alpha <3/4 $.  Adapting the argument presented in \cite{MR4021907} and further developed in \cite{MR4546674}, we just have to note that for all $ m \geq 0 $,
$$ \|e^{\mathbb{A}(t-t_0)} \zz(\cdot, t_0) \|_{L^2} = O(t^{-\alpha}) \implies \|D^m\,e^{\mathbb{A}(t-t_0)} \zz(\cdot, t_0) \|_{L^2} = O(t^{-\alpha - m/2})  \,.$$
 
For convenience, the proof of Claim \ref{Claim} will be given in Appendix.   
\end{proof}

\begin{Remark}
Due to this last result, the same interesting topological properties (obtained by L. Brandolese, C. Perusato and P. Zingano in \cite{BPZ}) hold for solutions to \eqref{micropolar}.
\end{Remark}

We are now in position to prove the main results.
\section{Proof of $L^2$ results}
\subsection{Proof of Theorem \ref{thm_L2}}
Let $ t _0>t_* $, where $ t_* $ is the regularity time (see \eqref{regularity_time}). First, we will prove the result for the micro-rotational field $\ww(\cdot,t)$. Defining $Z(\cdot,t)=e^{4\chi t}\ww(\cdot,t)$ and applying Duhamel's Principle, for $Z(\cdot,t)$ we obtain, after rewritting for $\ww(\cdot,t)$, that 
\begin{align*}
	\ww(\cdot,t) &=\: e^{\gamma \Delta  (t-t_{0})}e^{-4\chi (t-t_{0}) }\ww(\cdot,t_{0}) \:-\: \int_{t_{0}}^{t}e^{\gamma \Delta  (t-\tau )}e^{-4\chi (t-\tau) }(\uu \cdot \nabla \ww )(\cdot,\tau)d \tau\\
	&\:\:\:\:\:+\: \kappa \int_{t_{0}}^{t}e^{\gamma \Delta  (t-\tau )}e^{-4\chi (t-\tau) } \nabla(\nabla \cdot \ww )(\cdot,\tau)d \tau \\
	&\:\:\:\:\:+\: 2\chi \int_{t_{0}}^{t}e^{\gamma \Delta  (t-\tau )}e^{-4\chi (t-\tau) }(\nabla \times \uu  )(\cdot,\tau)d \tau .
\end{align*}
Hence,
\begin{align*}
	t^{\frac{1}{2}}\left \| \ww(\cdot,t) \right \|_{L^{2}(\mathbb{R}^{3})} & \leq \: t^{\frac{1}{2}}e^{-4\chi (t-t_{0}) }\left \| e^{\gamma \Delta  (t-t_{0})}\ww(\cdot,t_{0}) \right \|_{L^{2}(\mathbb{R}^{3})} \\
	&\:\:\:+\: t^{\frac{1}{2}}\int_{t_{0}}^{t}e^{-4\chi (t-\tau) }\left \| e^{\gamma \Delta  (t-\tau )}(\uu \cdot \nabla \ww )(\cdot,\tau) \right \|_{L^{2}(\mathbb{R}^{3})}d \tau\\
	&\:\:\:+\:\kappa t^{\frac{1}{2}} \int_{t_{0}}^{t}e^{-4\chi (t-\tau) } \left \| e^{\gamma \Delta  (t-\tau )}\nabla(\nabla \cdot \ww )(\cdot,\tau) \right \|_{L^{2}(\mathbb{R}^{3})}d \tau \\
	&\:\:\:+\: 2\chi t^{\frac{1}{2}} \int_{t_{0}}^{t}e^{-4\chi (t-\tau) }\left \| e^{\gamma \Delta  (t-\tau )}(\nabla \times \uu  )(\cdot,\tau) \right \|_{L^{2}(\mathbb{R}^{3})}d \tau  \\
	&  = \: I \:+\: II \:+\: III \:+\: IV
\end{align*}

It is clear that $t^{\frac{1}{2}}e^{-4\chi (t-t_{0}) } \left \| e^{\gamma \Delta  (t-t_{0})} \ww(\cdot,t_{0}) \right \|_{L^{2}(\mathbb{R}^{3})} \: \rightarrow  \: 0$ \,\, as\,\, $t \to +\infty$.\\

For $II$ , we use Lemma \ref{lema D alfa} and Lemma \ref{estimativa grad un medio}, we have
\begin{align*}
	&t^{\frac{1}{2}}\int_{t_{0}}^{t}e^{-4\chi (t-\tau) }\left \| e^{\gamma \Delta  (t-\tau )}(\uu \cdot \nabla \ww )(\cdot,\tau) \right \|_{L^{2}(\mathbb{R}^{3})}d \tau \\
	&\leq \:  K \gamma^{-\frac{3}{4}} t^{\frac{1}{2}} \int_{t_{0}}^{t}e^{-4\chi (t-\tau) }\left \| (\uu \cdot \nabla \ww )(\cdot,\tau) \right \|_{L^{1}(\mathbb{R}^{3})} (t-\tau )^{-\frac{3}{4}} d \tau \\
	&\leq \:  K \gamma^{-\frac{3}{4}} t^{\frac{1}{2}} \int_{t_{0}}^{t}e^{-4\chi (t-\tau) } \left \| \uu (\cdot,\tau) \right \|_{L^{2}(\mathbb{R}^{3})} \left \| D\ww (\cdot,\tau) \right \|_{L^{2}(\mathbb{R}^{3})}(t-\tau )^{-\frac{3}{4}} d \tau\\
	&\leq \:  K \gamma^{-\frac{3}{4}} \left \| (\uu ,\ww)(\cdot, t_{0} ) \right \|_{L^{2}(\mathbb{R}^{3})} t^{\frac{1}{2}} \int_{t_{0}}^{t}e^{-4\chi (t-\tau)} \left \| D\ww (\cdot,\tau) \right \|_{L^{2}(\mathbb{R}^{3})} (t-\tau )^{-\frac{3}{4}} d \tau \\
	&\leq \:  K \gamma^{-\frac{3}{4}} \left \| (\uu ,\ww)(\cdot, t_{0} ) \right \|_{L^{2}(\mathbb{R}^{3})} \epsilon   t^{\frac{1}{2}} \int_{t_{0}}^{t}e^{-4\chi (t-\tau)}  \tau^{-\frac{1}{2}}    (t-\tau )^{-\frac{3}{4}} d \tau \\
	&\leq \: K\gamma ^{-\frac{3}{4}}\epsilon \left \| (\uu,\ww)(\cdot,t_{0}) \right \|_{L^{2}(\mathbb{R}^{3})} \left ( t^{\frac{1}{4}} e^{- 2\chi t} \:+\: (4\chi)^{-\frac{1}{4}}\Gamma \left ( \frac{1}{4} \right ) \right ) .
\end{align*}
Therefore
\begin{equation}\label{2 w t}
	\lim _{t\rightarrow +\infty }t^{\frac{1}{2}}\int_{t_{0}}^{t}e^{-4\chi (t-\tau) }\left \| e^{\gamma \Delta  (t-\tau )}(\uu \cdot \nabla \ww )(\cdot,\tau) \right \|_{L^{2}(\mathbb{R}^{3})}d \tau = 0 .
\end{equation}\\
For $III$ , we use Lemma \ref{lema D alfa} and Lemma \ref{estimativa grad un medio}, we have
\begin{align*}
	&\kappa t^{\frac{1}{2}}\int_{t_{0}}^{t}e^{-4\chi (t-\tau) } \left \| e^{\gamma \Delta  (t-\tau )}\nabla(\nabla \cdot \ww )(\cdot,\tau) \right \|_{L^{2}(\mathbb{R}^{3})}d \tau  \\
	&\leq \: \kappa K \gamma^{-\frac{1}{2}} t^{\frac{1}{2}}\int_{t_{0}}^{t}e^{-4\chi (t-\tau) } \left \| D\ww(\cdot,\tau) \right \|_{L^{2}(\mathbb{R}^{3})}  (t-\tau)^{-\frac{1}{2}}d \tau\\
	&\leq\: \kappa K \gamma^{-\frac{1}{2}} \epsilon \: t^{\frac{1}{2}} \int_{t_{0}}^{t}e^{-4\chi (t-\tau) }\tau^{-\frac{1}{2}}(t-\tau)^{-\frac{1}{2}}d \tau \\
	&\leq \: \kappa K \gamma ^{-\frac{1}{2}}\epsilon \left [ t^{\frac{1}{2} }e^{-2\chi t} \:+\: (4\chi )^{-\frac{1}{2}}\sqrt{\pi } \right ] .
\end{align*}

So,
\begin{equation}\label{3 w t}
	\lim _{t\rightarrow +\infty }\kappa t^{\frac{1}{2}} \int_{t_{0}}^{t}e^{-4\chi (t-\tau) } \left \| e^{\gamma \Delta  (t-\tau )}\nabla(\nabla \cdot \ww )(\cdot,\tau) \right \|_{L^{2}(\mathbb{R}^{3})}d \tau \:=\: 0 .
\end{equation}\\
For $IV$,
\begin{equation}\label{4 w t}
	\lim _{t\rightarrow +\infty } 2\chi \; t^{\frac{1}{2}} \int_{t_{0}}^{t}e^{-4\chi (t-\tau) }\left \| e^{\gamma \Delta  (t-\tau )}(\nabla \times \uu  )(\cdot,\tau) \right \|_{L^{2}(\mathbb{R}^{3})}d \tau \:=\: 0
\end{equation}
Combining \eqref{2 w t}, \eqref{3 w t} and \eqref{4 w t}, we get
\begin{equation*}
	\lim _{t\rightarrow +\infty } t^{\frac{1}{2}} \left \| \ww(\cdot,t) \right \|_{L^{2}(\mathbb{R}^{3})} \:=\: 0 .
\end{equation*}
Thus, we complete the proof of  \eqref{eqn_w_thm1}. Now, we will show that $\left \| \uu \right \|_{L^{2}(\mathbb{R}^{3})} \rightarrow 0 $. We start rewriting the first equation in \eqref{micropolar} as
\begin{align*}
	\uu_{t} \:=\: (\mu + \chi) \Delta \uu \:+\: {\bf F}(\cdot, \tau) 
\end{align*}
where
\begin{equation*}
	{\bf F}(\cdot, \tau)\:=\:-\uu \cdot \nabla{\uu}\:-\: \nabla{p}\:-\: \eta \left | \uu \right |^{\beta -1}\uu  \:+\: 2\chi \nabla \times \ww .
\end{equation*}
In other words,
\begin{align*}
	{\bf F}(\cdot, \tau) \:=\: \mathbb{P}_{h}\left [ - \uu \cdot \nabla{\uu} \:-\: \eta \left | \uu \right |^{\beta -1}\uu \:+\: 2\chi \nabla \times \ww \right ],
\end{align*}
where $\mathbb{P}_{h} $ denotes the Helmholtz - Leray projector. By Duhamel's principle, we get
\begin{equation*}
	\uu(\cdot,t)\:=\:e^{(\mu + \chi) \Delta (t-t_{0})}\uu(\cdot,t_{0}) \:+\: \int_{t_{0}}^{t}e^{(\mu + \chi) \Delta (t-\tau)} {\bf F}(\cdot,\tau)d\tau ,
\end{equation*}
Since the Heat Kernel conmutes with the Helmholtz projector, we have
\begin{align*}
	\left \| \uu(\cdot,t) \right \|_{L^{2}(\mathbb{R}^{3})} & \:\leq \: \left \| e^{(\mu + \chi) \Delta (t-t_{0})}\uu(\cdot,t_{0}) \right \|_{L^{2}(\mathbb{R}^{3})} \\
	&\; \; \;\:+\: \int_{t_{0}}^{t} \left \| e^{(\mu +\chi) \Delta (t-\tau)} (\uu \cdot \nabla \uu)(\cdot,\tau) \right \|_{L^{2}(\mathbb{R}^{3})}d\tau \\
	&\; \; \;\:+\: \eta \int_{t_{0}}^{t} \left \| e^{(\mu +\chi) \Delta (t-\tau)} ( \left | \uu \right |^{\beta -1}\uu )(\cdot,\tau) \right \|_{L^{2}(\mathbb{R}^{3})}d\tau \\
	&\; \; \;\:+\: 2\chi \int_{t_{0}}^{t} \left \| e^{(\mu +\chi) \Delta (t-\tau)} ( \nabla \times \ww)(\cdot,\tau) \right \|_{L^{2}(\mathbb{R}^{3})}d\tau \\
	&\;  = \: I \:+\: II \:+\: III \:+\: IV .
\end{align*}
It is clear that $\left \| e^{(\mu +\chi)  \Delta (t-t_{0})}\uu(\cdot,t_{0}) \right \|_{L^{2}(\mathbb{R}^{3})} \rightarrow 0$ \quad  as \quad  $t \rightarrow +\infty $.\\

For $II$, we use again \ref{lema D alfa} and \ref{estimativa grad un medio}, we get
\begin{align*}
	&\int_{t_{0}}^{t} \left \| e^{(\mu +\chi) \Delta (t-\tau)} \uu(\cdot,\tau)\cdot \nabla{\uu}(\cdot,\tau)  \right \|_{L^{2}(\mathbb{R}^{3})}d\tau \\
	&\leq K (\mu+\chi)  ^{-\frac{3}{4}}\int_{t_{0}}^{t}(t-\tau)^{-\frac{3}{4}}\left \| \uu(\cdot,\tau)\cdot \nabla{\uu}(\cdot,\tau)  \right \|_{L^{1}(\mathbb{R}^{3})}d\tau  \\
	& \leq \: K (\mu+ \chi )^{-\frac{3}{4}}\sqrt{3} \int_{t_{0}}^{t} (t-\tau)^{-\frac{3}{4}}\left \| \uu(\cdot,\tau ) \right \|_{L^{2}(\mathbb{R}^{3})}\left \| D\uu(\cdot,\tau ) \right \|_{L^{2}(\mathbb{R}^{3})}d\tau\\
	&\leq \: K (\mu+ \chi )^{-\frac{3}{4}} \left \| (\uu,\ww)(\cdot,t_{0} ) \right \|_{L^{2}(\mathbb{R}^{3})} \int_{t_{0}}^{t} (t-\tau)^{-\frac{3}{4}}  \left \| (D\uu,D\ww)(\cdot,\tau ) \right \|_{L^{2}(\mathbb{R}^{3})}d\tau\\
	&\leq \: K(\mu+ \chi )^{-\frac{3}{4}} \left \| (\uu,\ww)(\cdot,t_{0} ) \right \|_{L^{2}(\mathbb{R}^{3})} \epsilon \int_{t_{0}}^{t} (t-\tau)^{-\frac{3}{4}}\tau^{-\frac{1}{2}}d\tau \\
	&\leq \: K\: (\mu+ \chi )^{-\frac{3}{4}} \left \| (\uu,\ww)(\cdot,t_{0} ) \right \|_{L^{2}(\mathbb{R}^{3})} \: \epsilon \: t^{-\frac{1}{4}} .
\end{align*}
Therefore
\begin{equation}\label{2 u}
	\lim _{t \to +\infty } \int_{t_{0}}^{t} \left \| e^{(\mu+ \chi ) \Delta (t-\tau)} \uu(\cdot,\tau)\cdot \nabla{\uu}(\cdot,\tau)  \right \|_{L^{2}(\mathbb{R}^{3})}d\tau   \:=\:  0 .
\end{equation}
For $III$, applying lemma \ref{lema D alfa}, Gagliardo - Sobolev - Nirenberg inequality and lemma \ref{estimativa grad un medio}, we get
\begin{align*}
	&\int_{t_{0}}^{t}\left \| e^{(\mu+ \chi ) \Delta (t-\tau)} \eta \left | \uu \right |^{\beta -1}\uu(\cdot,\tau)  \right \|_{L^{2}(\mathbb{R}^{3})}d\tau \\ &=\: K(m)\: (\mu+ \chi )^{-\frac{3}{4}} 
	\int_{t_{0}}^{t}(t-\tau)^{-\frac{3}{4}} \left \| \eta \left | \uu \right |^{\beta -1}\uu (\cdot,\tau)  \right \|_{L^{1}(\mathbb{R}^{3})}d\tau \\
	&\: =\: K(m)\: (\mu+ \chi )^{-\frac{3}{4}} \eta \int_{t_{0}}^{t}(t-\tau)^{-\frac{3}{4}}\left \| \uu \right \|_{L^{\beta }(\mathbb{R}^{3})}^{\beta }d\tau\\
	&\leq K(m)\: (\mu+ \chi )^{-\frac{3}{4}}  \eta \int_{t_{0}}^{t}(t-\tau)^{-\frac{3}{4}}\left \| \uu \right \|_{L^{2}(\mathbb{R}^{3})}^{\frac{6-\beta}{2} }\left \| D\uu \right \|_{L^{2}(\mathbb{R}^{3})}^{\frac{3(\beta-2)}{2} }d\tau\\
	&\leq K(m)\: (\mu+ \chi )^{-\frac{3}{4}} \eta \: \left \| (\uu_{0},\ww_{0}) \right \|_{L^{2}(\mathbb{R}^{3})}^{\frac{6-\beta}{2} } \int_{t_{0}}^{t}(t-\tau)^{-\frac{3}{4}} \left ( \epsilon \tau ^{-\frac{1}{2}} \right )^{\frac{3(\beta-2)}{2}}d\tau \\
	&\leq K(m)\: (\mu+ \chi )^{-\frac{3}{4}} \eta \: C^{\frac{6-\beta}{2} }\: \epsilon^{\frac{3(\beta-2)}{2}} \int_{t_{0}}^{t}(t-\tau)^{-\frac{3}{4}} \tau^{-\frac{3(\beta-2)}{4}}d\tau .
\end{align*}
Now, we observe that
\begin{align*}
\int_{t_{0}}^{t}(t-\tau)^{-\frac{3}{4}} \tau^{-\frac{3(\beta-2)}{4}}d\tau = \left\{\begin{matrix}
	C(\beta ) \: t^{\frac{7 - 3\beta  }{4} } & ;  \forall \; \;  \frac{7}{3} \leq \beta < \frac{10}{3} \\ 
	C \: t^{-\frac{ 3  }{4} } \ln \left ( \frac{t}{2} \right ) \:+\: t^{\frac{7 - 3\beta  }{4} }  & ; \beta = \frac{10}{3} \\
	C(\beta ) \: \left [ t^{-\frac{3}{4} } + t^{\frac{7 - 3\beta  }{4} } \right ]  &  ; \forall \; \;  \beta > \frac{10}{3}
\end{matrix}\right.
\end{align*}

Therefore
\begin{equation}\label{3 u}
	\lim _{t \to +\infty } \int_{t_{0}}^{t} \left \| e^{(\mu+ \chi ) \Delta (t-\tau)} \eta \left | \uu \right |^{\beta -1}\uu(\cdot,\tau)   \right \|_{L^{2}(\mathbb{R}^{3})}d\tau   \:=\:  0
\end{equation}
for all $\beta \: \geq \: \frac{7}{3} $. Finally, for $IV$, by using Lemma \ref{lema D alfa} we may assume that $t_{0} $ is large enough such that
\begin{align*}
	\left \| \ww(\cdot,t)   \right \|_{L^{2}(\mathbb{R}^{3})}d\tau   \:<\:  \epsilon t^{\frac{1}{2}}  ,  \; \; \;\; \; \; \; \; \;  \forall t  >  t_{0}.
\end{align*}
So, we have
\begin{align*}
	&2\chi \int_{t_{0}}^{t}\left \| e^{(\mu +\chi )\Delta (t-\tau)} (\nabla\times \ww)(\cdot,\tau) \right \|_{L^{2}(\mathbb{R}^{3})}d\tau \\
	&\leq \: K(2\chi)(\mu +\chi )^{-\frac{1}{2}}\int_{t_{0}}^{t}(t-\tau)^{-\frac{1}{2}} \left \| \ww(\cdot,\tau) \right \|_{L^{2}(\mathbb{R}^{3})}d\tau \\
	&\leq \: K(2\chi)(\mu +\chi )^{-\frac{1}{2}}\epsilon \int_{t_{0}}^{t}(t-\tau)^{-\frac{1}{2}}\tau^{-\frac{1}{2}}d\tau \\
	&\leq K(2\chi)(\mu +\chi )^{-\frac{1}{2}} \: \epsilon \: C .
\end{align*}

Therefore
\begin{equation}\label{4 u}
	\lim _{t\rightarrow +\infty }2\chi \int_{t_{0}}^{t}\left \| e^{(\mu +\chi )\Delta (t-\tau)} (\nabla\times \ww)(\cdot,\tau) \right \|_{L^{2}(\mathbb{R}^{3})}d\tau \:=\: 0 .
\end{equation}\\
Combining \eqref{2 u}, \eqref{3 u} and \eqref{4 u}, we have
\begin{align*}
	\lim _{t\rightarrow +\infty }\left \| \uu(\cdot,t) \right \|_{L^{2}(\mathbb{R}^{3})} \:=\: 0 .
\end{align*}
which completes the proof of Theorem \ref{thm_L2} , for all $\beta \geq \frac{7}{3}$.

\section{Proof of $\dot{H}^m$ results}
A natural consequence of Theorem \ref{thm_L2} and Proposition \ref{proposition} is the following result.
\begin{Corollary}\label{corollary w}
		Let $(\uu,\ww)(\cdot,t)$ be any Leray solution of \eqref{micropolar} with $\| (\uu, \ww)(\cdot,t)\|_{L^2} = O(t^{-\alpha})$, for some $\alpha \geq 0$. Then, we have
		$\|\ww(\cdot,t) \|_{L^2} = O(t^{-\alpha -\frac{1}{2}})$.
\end{Corollary}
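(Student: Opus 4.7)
The natural approach is to apply Duhamel's formula to the $\ww$--equation and exploit the exponential factor $e^{-4\chi(t-\tau)}$ coming from the linear damping term $-4\chi\ww$, exactly as in the proof of Theorem \ref{thm_L2}, but now tracking the precise polynomial rate $t^{-\alpha-1/2}$ rather than just $o(t^{-1/2})$. First I would note that the hypothesis $\|(\uu,\ww)(\cdot,t)\|_{L^2}=O(t^{-\alpha})$ together with Proposition~\ref{proposition} yields
$$
\|\nabla\uu(\cdot,t)\|_{L^2}+\|\nabla\ww(\cdot,t)\|_{L^2}=O(t^{-\alpha-1/2}),
$$
and then write, for $t_0>t_\ast$ fixed,
$$
\ww(\cdot,t)=e^{\gamma\Delta(t-t_0)-4\chi(t-t_0)}\ww(\cdot,t_0)-\int_{t_0}^{t}e^{\gamma\Delta(t-\tau)-4\chi(t-\tau)}\bigl[\uu\!\cdot\!\nabla\ww-\kappa\nabla(\nabla\cdot\ww)-2\chi\nabla\!\times\!\uu\bigr](\cdot,\tau)\,d\tau.
$$

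Next I would estimate the four resulting pieces separately, in each case splitting the time integral as $\int_{t_0}^{t/2}+\int_{t/2}^{t}$: on the first interval the factor $e^{-4\chi(t-\tau)}\leq e^{-2\chi t}$ yields exponential decay, while on the second one has $\tau\sim t$, so the worst polynomial factor in $\tau$ can be pulled out as $t$-decay and the remaining integral $\int_0^\infty e^{-4\chi s}s^{-\sigma}\,ds$ is a finite Gamma-type constant whenever $\sigma<1$. For the linear semigroup term the bound is exponentially small. For the convective contribution, Lemma~\ref{lema D alfa} with $r=1$ and the H\"older estimate $\|\uu\!\cdot\!\nabla\ww\|_{L^1}\leq\|\uu\|_{L^2}\|\nabla\ww\|_{L^2}=O(\tau^{-2\alpha-1/2})$ lead to an $O(t^{-2\alpha-1/2})$ bound, which is stronger than required. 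For the gyroviscosity term, I would transfer one spatial derivative to the heat kernel via Lemma~\ref{lema D alfa} to obtain $(\gamma(t-\tau))^{-1/2}\|\nabla\ww(\cdot,\tau)\|_{L^2}$, and then the decay of $\nabla\ww$ produces the desired $O(t^{-\alpha-1/2})$.

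The delicate piece, which is the main obstacle, is the coupling term $2\chi\nabla\!\times\!\uu$: a naive use of the $L^1\!\to\!L^2$ heat-kernel bound only yields $O(t^{-\alpha})$, which loses the half-power. The key observation is that one should \emph{not} move the curl onto the semigroup; instead, use that the heat semigroup is a contraction in $L^2$, so that
$$
\bigl\|e^{\gamma\Delta(t-\tau)}(\nabla\!\times\!\uu)(\cdot,\tau)\bigr\|_{L^2}\leq\|\nabla\!\times\!\uu(\cdot,\tau)\|_{L^2}\leq C\,\|\nabla\uu(\cdot,\tau)\|_{L^2}=O(\tau^{-\alpha-1/2}),
$$
and this is where Proposition~\ref{proposition} is indispensable. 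The splitting argument then gives
$$
2\chi\int_{t_0}^{t}e^{-4\chi(t-\tau)}\|\nabla\!\times\!\uu(\cdot,\tau)\|_{L^2}\,d\tau\;\leq\;C\,t^{-\alpha-1/2}\!\int_{0}^{\infty}\!e^{-4\chi s}\,ds\;=\;O(t^{-\alpha-1/2}).
$$

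Collecting the four bounds yields $\|\ww(\cdot,t)\|_{L^2}=O(t^{-\alpha-1/2})$, which is the desired conclusion. The philosophy is exactly the one used in the authors' proof of \eqref{eqn_w_thm1}: the exponential damping localizes each Duhamel integral near $\tau=t$ and therefore returns, up to a harmless Gamma-function constant, the best available polynomial decay rate of the integrand at $\tau\approx t$; combining this localization with Proposition~\ref{proposition} promotes the basic $O(t^{-\alpha})$ rate to the sharper $O(t^{-\alpha-1/2})$ for the angular velocity.
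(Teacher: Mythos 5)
Your argument is correct and follows exactly the route the paper intends: the paper states this corollary without proof as ``a natural consequence'' of Theorem \ref{thm_L2} and Proposition \ref{proposition}, and the implicit argument is precisely your Duhamel representation for $\ww$ with the exponential factor $e^{-4\chi(t-\tau)}$, the splitting $\int_{t_0}^{t/2}+\int_{t/2}^{t}$, and the upgraded rates $\|(\nabla\uu,\nabla\ww)(\cdot,\tau)\|_{L^2}=O(\tau^{-\alpha-1/2})$ from Proposition \ref{proposition}. Your handling of the coupling term $2\chi\nabla\times\uu$ — keeping the derivative on $\uu$ and using the $L^2$-contractivity of the heat semigroup rather than shifting the curl onto the kernel — is indeed the step that recovers the extra factor $t^{-1/2}$, and it matches how the analogous term must be treated in the paper's proof of \eqref{eqn_w_thm1}.
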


\subsection{Proof of Theorem \ref{thm_inequalities} }
Let $ \zz(\cdot,t) = (\uu,\ww)(\cdot, t) $ any Leray solution to \eqref{micropolar}. First, we note that by using  the corollary \ref{corollary w} we actually have  
\mbox{ } \\
\begin{equation*}
	\begin{split}
		\limsup_{t \to +\infty }t^\alpha\,\|\uu(\cdot,t) \|_{L^2} < \infty \\
		\,\,\, \Rightarrow \,\, \limsup_{t \to +\infty }t^\alpha\,\|\zz(\cdot,t) \|_{L^2} = \limsup_{t \to +\infty }t^\alpha\,\|\uu(\cdot,t) \|_{L^2} < \infty 
	\end{split}
\end{equation*}
\mbox{ } \\
Due to this, we will show the result for ${\lambda}_0:= \limsup_{t \to +\infty }t^\alpha \|\zz(\cdot,t) \|_{L^2} < \infty$.

Let $$ {\bf Q}  (\cdot,t): = \left[
\begin{array}{ccc}
	\mathbb{P}_h\,[-\,	(\uu \cdot \nabla)\uu - \eta|\uu|^{\beta - 1}\,\uu\,] \\
	-\,	(\uu \cdot \nabla)\ww  
\end{array}
\right]\!\!\;\!$$
We consider the following integral representation for $\zz(\cdot,t):=(\uu, \ww)(\cdot,t)$  
\begin{equation*}
	{\zz}(\cdot,t) = e^{\mathbb{A}(t-t_0)} \,\zz (\cdot,t_0)  + \int_{t_0}^{t} e^{\mathbb{A}(t - \tau) } {\bf Q}(\cdot,\tau) d\tau, \quad \forall t>t_0,
\end{equation*}
for $ t_0 > t_{*} $ (see \eqref{regularity_time}) where $ \mathbb{A} $ is the linear operator given by lemma \ref{lema-linear_opertaror} above.
Taking the operator $\nabla^{m}$, norm $\left \| \cdot \right \|_{L^{2}(\mathbb{R}^{3})}$ and multiplying by $t^{\alpha + \frac{m}{2}}$, one has\\
\begin{align*}
	t^{\alpha +\frac{m}{2}}\left \| \nabla^{m} \uu(\cdot,t) \right \|_{L^{2}(\mathbb{R}^{3})} & \leq \: t^{\alpha +\frac{m}{2}} \left \| \nabla^{m} e^{(\mu +\chi) \Delta  (t-t_{0})}\uu_{0} \right \|_{L^{2}(\mathbb{R}^{3})} \\
	&\:\:\:+\: t^{\alpha +\frac{m}{2}}\int_{t_{0}}^{t}\left \| \nabla^{m} e^{(\mu +\chi) \Delta  (t-\tau )} (\uu \cdot \nabla \uu )(\cdot,\tau) \right \|_{L^{2}(\mathbb{R}^{3})}d \tau\\
	&\:\:\:+\:  t^{\alpha +\frac{m}{2}} \int_{t_{0}}^{t} \left \| \nabla^{m} e^{(\mu +\chi) \Delta  (t-\tau )} \eta (\left | \uu \right |^{\beta -1} \uu) (\cdot,\tau) \right \|_{L^{2}(\mathbb{R}^{3})}d \tau \\
	&\:\:\:+\: 2\chi t^{\alpha +\frac{m}{2}} \int_{t_{0}}^{t}\left \| \nabla^{m}e^{(\mu +\chi) \Delta  (t-\tau )} (\nabla \times  \ww)  (\cdot,\tau) \right \|_{L^{2}(\mathbb{R}^{3})}d \tau \\
	& = \: I  \:+\:  II  \:+\:  III,  
\end{align*}
where we have used some basic properties for the Leray projector.

To estimate term I, we invoke  Theorem \ref{inverse-wiegner}, which provides a inverse Wiegner-type result. Therefore, 
\begin{equation}\label{1u sup}
	\lim _{t\rightarrow +\infty} t^{\alpha +\frac{m}{2}} \left \| \nabla^{m} e^{(\mu +\chi) \Delta  (t-t_{0})} \uu(\cdot,t_{0}) \right \|_{L^{2}(\mathbb{R}^{3})} \leq C_{m, \alpha} \lambda_0(\alpha).  
\end{equation}

For II, 

\begin{align*}
	&t^{\alpha +\frac{m}{2}} \int_{t_{0}}^{\frac{t}{2}} \left \| 
	\nabla^{m} e^{(\mu +\chi) \Delta  (t-\tau )} (\uu \cdot \nabla \uu )(\cdot,\tau) \right \|_{L^{2}(\mathbb{R}^{3})}d \tau \\
	&\leq \: (\mu +\chi)^{-\frac{m}{2}-\frac{5}{4}} \: t^{\alpha +\frac{m}{2}} \int_{t_{0}}^{\frac{t}{2}} (t-\tau )^{-\frac{m}{2}-\frac{5}{4}} \left \| \uu   \right \|_{L^{2}(\mathbb{R}^{3})}^{2} d\tau \\
	& \leq \: (\mu +\chi)^{-\frac{m}{2}-\frac{5}{4}} \: \left \| (\uu_{0},\ww_{0}) \right \|_{L^{2}(\mathbb{R}^{3})} \: t^{\alpha +\frac{m}{2}} \int_{t_{0}}^{\frac{t}{2}} (t-\tau )^{-\frac{m}{2}-\frac{5}{4}}  \left \| \uu   \right \|_{L^{2}(\mathbb{R}^{3})} d\tau \\
	& \leq \:C (\mu +\chi)^{-\frac{m}{2}-\frac{5}{4}} \left \| (\uu_{0},\ww_{0}) \right \|_{L^{2}(\mathbb{R}^{3})}  \: (\lambda _{0}(\alpha )+\epsilon ) \: \underbrace{t^{\alpha +\frac{m}{2}} \int_{t_{0}}^{\frac{t}{2}} (t-\tau )^{-\frac{m}{2}-\frac{5}{4}}  \tau^{-\alpha } d\tau}_{(*)} 
\end{align*}
Note that
\begin{align*}
(*)=\left\{\begin{matrix}
C_{1} (\alpha, m) \: t^{-\frac{1}{4}}	& \forall \: 0 \leq \: \alpha < \: 1 \\ 
C_{2} ( m) \: \left [ t^{ -\frac{1}{4}} \: \: \ln \left ( \frac{t}{2} \right ) \:+\:  t^{-\frac{1}{4}} \right ] 	& \alpha = 1 \\ 
C_{3} (\alpha, m) \: \left [ t^{\alpha -\frac{5}{4}} \:+\: t^{-\frac{1}{4}} \right ] 	& \forall \: 1 < \: \alpha < \: \frac{5}{4}
\end{matrix}\right.
\end{align*}


Therefore, we have
\begin{equation}\label{2u sup}
	\limsup _{t \to +\infty } t^{\alpha +\frac{m}{2}} \int_{t_{0}}^{t} \left \| \nabla ^{m} e^{(\mu +\chi) \Delta  (t-\tau )} (\uu \cdot \nabla \uu )(\cdot,\tau) \right \|_{L^{2}(\mathbb{R}^{3})}d \tau \: = \: 0 .
\end{equation}\\

For $III$, applying the Gagliardo–Nirenberg inequality, we get
\begin{align*}
	& t^{\alpha +\frac{m}{2}} \int_{t_{0}}^{t} \left \| \nabla^{m} e^{(\mu +\chi) \Delta  (t-\tau )} ( \eta \left | \uu \right | ^{\beta -1}\uu) (\cdot,\tau) \right \|_{L^{2}(\mathbb{R}^{3})}d \tau\\
	&\leq \: K(m) (\mu + \chi)^{-\frac{3}{4}-\frac{m}{2}} \eta \: t^{\alpha +\frac{m}{2}} \int_{t_{0}}^{t}(t-\tau)^{-\frac{3}{4}-\frac{m}{2}} \left \| \left | \uu \right |^{\beta -1}\uu (\cdot,\tau)  \right \|_{L^{1}(\mathbb{R}^{3})}d\tau \\
	&\:=\: K(m) (\mu + \chi)^{-\frac{3}{4}-\frac{m}{2}} \eta \:  t^{\alpha +\frac{m}{2}} \int_{t_{0}}^{t}(t-\tau)^{-\frac{3}{4}-\frac{m}{2}} \left \| \uu \right \|_{L^{\beta }(\mathbb{R}^{3})}^{\beta }d\tau\\
	&\leq K(m) (\mu + \chi)^{-\frac{3}{4}-\frac{m}{2}} \eta \: t^{\alpha +\frac{m}{2}} \int_{t_{0}}^{t}(t-\tau)^{-\frac{3}{4}-\frac{m}{2}}\left \| \uu \right \|_{L^{2}(\mathbb{R}^{3})}^{\frac{6-\beta}{2} }\left \| D\uu \right \|_{L^{2}(\mathbb{R}^{3})}^{\frac{3(\beta-2)}{2} }d\tau\\
	&\leq C(\mu,\chi,m,\eta) \: (\lambda _{0}(\alpha )+\epsilon)^{\frac{6-\beta}{2}}  t^{\alpha +\frac{m}{2}} \int_{t_{0}}^{t}(t-\tau)^{-\frac{3}{4}-\frac{m}{2}}  \tau^{- \alpha \left ( \frac{6-\beta}{2} \right )  } \tau^{ \left ( -\alpha - \frac{1}{2} \right )
		\left ( \frac{3(\beta-2)}{2} \right ) }d\tau\\
	&= \: C(\mu,\chi,m,\eta) \: (\lambda _{0}(\alpha )+\epsilon)^{\frac{6-\beta}{2}} \; \underbrace{t^{\alpha +\frac{m}{2}} \int_{t_{0}}^{t} \tau^{ \frac{6- \beta(4\alpha + 3)}{4}  } (t-\tau)^{-\frac{3}{4} -\frac{m}{2} } d\tau}_{(*)} .
\end{align*}
Note that 
\begin{align*}
(*) = \left\{\begin{matrix}
	C_{1}(\alpha,\beta,m) \: t^{\frac{4\alpha + 7 - \beta (4\alpha +3)  }{4} } & ; \; \; \; \; \; \forall \; \;  \frac{4\alpha + 7}{4\alpha +3} < \beta < \frac{10}{4\alpha + 3} \\ 
	C_{2}(m) \: t^{\alpha -\frac{3}{4} } \ln \left ( \frac{t}{2} \right ) \:+\: C_{2}(\alpha,m) \: t^{\frac{4\alpha  - 3  }{4} }  & ;  \; \; \beta = \frac{10}{4\alpha + 3} \\
	C_{3}(\alpha,\beta,m) \:  \left [ t^{\alpha -\frac{3}{4} } \:+\: t^{\frac{4\alpha + 7 - \beta (4\alpha +3)  }{4} } \right ]  &  ; \; \; \forall \; \;  \beta > \frac{10}{4\alpha + 3} .
\end{matrix}\right. 
\end{align*}

Therefore, we have
\begin{equation}\label{3u sup}
	\limsup _{t \to +\infty } t^{\alpha +\frac{m}{2}}\: \int_{t_{0}}^{t} \left \| \nabla^{m} e^{(\mu +\chi) \Delta  (t-\tau )} (\eta \left | \uu \right | ^{\beta -1}\uu) (\cdot,\tau) \right \|_{L^{2}(\mathbb{R}^{3})} d \tau \: 
	= \: 0   
\end{equation}
for all  $\beta \: > \: \frac{4 \alpha + 7}{4 \alpha + 3}$ and for all $0 \: \leq \: \alpha \: < \: \frac{3}{4} $ .\\


Therefore combining \eqref{1u sup}  ,   \eqref{2u sup}  and \eqref{3u sup}, we finally have
\begin{align*}
	\limsup _{t \to +\infty } t^{\alpha +\frac{m}{2}} \left \| \nabla ^{m}\uu \right \|_{L^{2}(\mathbb{R}^{3})} \leq C_{\alpha ,m} \lambda _{0}(\alpha ) .
\end{align*} \\
Thus we complete the proof of \eqref{inequality_z}. In order to prove assertion \eqref{inequality_w} of Theorem \ref{thm_inequalities}. As before, appliying the  Duhamel's principle, we get
\begin{align*}
	t^{\alpha +\frac{m+1}{2}}\left \| \ww(\cdot,t) \right \|_{L^{2}(\mathbb{R}^{3})} & \leq \: t^{\alpha +\frac{m+1}{2}}e^{-4\chi (t-t_{0}) }\left \|D^{m} e^{\gamma \Delta  (t-t_{0})} \ww(\cdot,t_{0}) \right \|_{L^{2}(\mathbb{R}^{3})} \\
	&\:\:\:+\: t^{\alpha +\frac{m+1}{2}}\int_{t_{0}}^{t}e^{-4\chi (t-\tau) }\left \| e^{\gamma \Delta  (t-\tau )} \nabla^{m}(\uu \cdot \nabla \ww )(\cdot,\tau) \right \|_{L^{2}(\mathbb{R}^{3})}d \tau\\
	&\:\:\:+\:\kappa t^{\alpha +\frac{m+1}{2}} \int_{t_{0}}^{t}e^{-4\chi (t-\tau) } \left \| e^{\gamma \Delta  (t-\tau )} \nabla^{m+2} \ww (\cdot,\tau) \right \|_{L^{2}(\mathbb{R}^{3})}d \tau \\
	&\:\:\:+\: 2\chi t^{\alpha +\frac{m+1}{2}} \int_{t_{0}}^{t}e^{-4\chi (t-\tau) }\left \| e^{\gamma \Delta  (t-\tau )} \nabla^{m+1} \uu  (\cdot,\tau) \right \|_{L^{2}(\mathbb{R}^{3})}d \tau \\
	&=\: I  \:+\: II \:+\:  III \:+\:  IV .
\end{align*}

First, it is clear that $t^{\alpha +\frac{m+1}{2}}e^{-4\chi (t-t_{0}) }\left \|D^{m} e^{\gamma \Delta  (t-t_{0})} \ww(\cdot,t_{0}) \right \|_{L^{2}(\mathbb{R}^{3})} \: \rightarrow \: 0$ as $t \rightarrow +\infty$.

For II,
\begin{align*}
	&t^{\alpha +\frac{m+1}{2}}\int_{t_{0}}^{t} e^{-4\chi (t-\tau) }\left \| e^{\gamma \Delta  (t-\tau )} D^{m}(\uu \cdot \nabla \ww )(\cdot,\tau) \right \|_{L^{2}(\mathbb{R}^{3})}d \tau\\
	&\leq \: K(m) \gamma ^{-\frac{3}{4}-\frac{m}{2}} \: t^{\alpha +\frac{m}{2}+ \frac{1}{2} }\int_{t_{0}}^{t} e^{-4\chi (t-\tau) } \left \| \uu \cdot \nabla \ww  \right \|_{L^{1}(\mathbb{R}^{3})}(t-\tau)^{-\frac{3}{4}-\frac{m}{2}} d \tau\\
	&\leq \: K(m) \gamma ^{-\frac{3}{4}-\frac{m}{2}} \: t^{\alpha +\frac{m}{2} +\frac{1}{2} }\int_{t_{0}}^{t} e^{-4\chi (t-\tau) } \left \| \uu   \right \|_{L^{2}(\mathbb{R}^{3})}\left \| \nabla \ww  \right \|_{L^{2}(\mathbb{R}^{3})} (t-\tau)^{-\frac{3}{4}-\frac{m}{2}}  d\tau \\
	&\leq \: K(m) \gamma ^{-\frac{3}{4}-\frac{m}{2}} \:(\lambda _{0}(\alpha )+\epsilon )\: \underbrace{t^{\alpha +\frac{m}{2}+ \frac{1}{2} }\int_{t_{0}}^{t} e^{-4\chi (t-\tau) }  \tau ^{-2 \alpha - \frac{1}{2}} (t-\tau)^{-\frac{3}{4}-\frac{m}{2}}  d\tau}_{(*)} .  
\end{align*}

Note that
\begin{align*}
(*)= \left\{\begin{matrix} 
	C_{1}(\alpha,m)  \left ( t^{ \frac{1}{4}- \alpha} e^{-2\chi t} \:+\: (4\chi)^{-1} t^{-\alpha -\frac{3}{4}} \right )    & \;  ; \forall \; 0 \leq 
	\alpha < \frac{1}{4} \\
	C_{2}(m) \: \left [ e^{-2\chi t} \ln \left ( \frac{t}{2} \right ) \:+ \: (4\chi)^{-1} t^{-1} \right ]    & \alpha = \frac{1}{4} \\
	C_{3}(\alpha,m)  \left ( t^{\alpha - \frac{1}{4}} e^{-2\chi t} \:+\: (4\chi)^{-1} t^{-\alpha -\frac{3}{4}} \right )   & \;  ; \forall \; \alpha > \frac{1}{4}\\
\end{matrix}\right.
\end{align*}

Now, letting $t\rightarrow +\infty$, we have
\begin{equation}\label{2w sup}
	\limsup _{t \to +\infty } t^{\alpha +\frac{m+1}{2}}\int_{t_{0}}^{t}e^{-4\chi (t-\tau) }\left \| e^{\gamma \Delta  (t-\tau )} D^{m}(\uu \cdot \nabla \ww )(\cdot,\tau) \right \|_{L^{2}(\mathbb{R}^{3})}d \tau \:=\: 0 .
\end{equation}

For III,
\begin{align*}
	&\kappa \: t^{\alpha +\frac{m+1}{2}} \int_{t_{0}}^{t}e^{-4\chi (t-\tau) } \left \| e^{\gamma \Delta  (t-\tau )} D^{m+2} \ww (\cdot,\tau) \right \|_{L^{2}(\mathbb{R}^{3})}d \tau\\
	&\leq \: K(m)\: \kappa \: t^{\alpha +\frac{m+1}{2}} \int_{t_{0}}^{t}e^{-4\chi (t-\tau) } \left \| D^{m+2} \ww (\cdot,\tau) \right \|_{L^{2}(\mathbb{R}^{3})}d \tau \\
	&\leq \: K(m)\:(\lambda _{0}(\alpha )+\epsilon )\: \kappa \: t^{\alpha +\frac{m+1}{2}} \int_{t_{0}}^{t}e^{-4\chi (t-\tau) } \tau^{-\alpha -\frac{m+2}{2} }  d \tau \\
	&\leq \: K(m)\:(\lambda _{0}(\alpha )+\epsilon )\: \kappa \: t^{\alpha +\frac{m+1}{2}} \left [ \left ( \frac{1}{\alpha + \frac{m}{2}} \right ) \: t_{0}^{-\alpha -\frac{m}{2}}  \: e^{-2 \chi t }   \:+ \:   (4 \chi)^{-1 }  \left ( \frac{t}{2} \right )^{- \alpha -\frac{m+2}{2} }  \right ]\\
	&\leq \: K(\alpha, m)\:(\lambda _{0}(\alpha )+\epsilon )\: \kappa \: \left [ t^{\alpha +\frac{m+1}{2}} \: e^{-2\chi t } \:+\: t^{-\frac{1}{2}} (4\chi)^{-1 } \right ]
\end{align*}\\
so that we have
\begin{equation}\label{3w sup}
	\limsup _{t \to +\infty } \kappa \: t^{\alpha +\frac{m+1}{2}} \int_{t_{0}}^{t}e^{-4\chi (t-\tau) } \left \| e^{\gamma \Delta  (t-\tau )} D^{m+2} \ww (\cdot,\tau) \right \|_{L^{2}(\mathbb{R}^{3})}d \tau = 0.
\end{equation}\\
For IV, we get
\begin{align*}
	&2\chi t^{\alpha +\frac{m+1}{2}} \int_{t_{0}}^{t}e^{-4\chi (t-\tau) }\left \| e^{\gamma \Delta  (t-\tau )} D^{m+1} \uu  (\cdot,\tau) \right \|_{L^{2}(\mathbb{R}^{3})}d \tau \\
	&\leq \: 2\chi K(m) t^{\alpha +\frac{m+1}{2}} \int_{t_{0}}^{t}e^{-4\chi (t-\tau) }\left \| D^{m+1} \uu  (\cdot,\tau) \right \|_{L^{2}(\mathbb{R}^{3})}d \tau \\
	&\leq \: 2\chi K(m) \:(\lambda _{0}(\alpha )+\epsilon )\: t^{\alpha +\frac{m+1}{2}} \int_{t_{0}}^{t}e^{-4\chi (t-\tau) } \tau ^{-\alpha -\frac{m+1}{2}} d \tau \\
	&\leq \: 2\chi K(m) \: (\lambda _{0}(\alpha )+\epsilon )\:  t^{\alpha +\frac{m+1}{2}} \left [ C(\alpha,m) t_{0}^{-\alpha -\frac{m}{2} +\frac{1}{2}} e^{-2\chi t }  \:+\: (4 \chi)^{-1 } \left ( \frac{t}{2} \right ) ^{-\alpha  -\frac{m+1}{2}} \right ] \\
	&\leq \: 2\chi C(\alpha,m) \: (\lambda _{0}(\alpha )+\epsilon )\: \left [ e^{-2\chi t } \: t^{\alpha + \frac{m+1}{2}}   \: + \:  (4\chi)^{-1 }  \right ] .
\end{align*}\\
Letting $\epsilon \rightarrow 0$ and $t\rightarrow +\infty$ in this order, we have
\begin{equation}\label{4w sup}
	\begin{split}
	&\limsup _{t\rightarrow + \infty } t^{\alpha +\frac{m+1}{2}} \int_{t_{0}}^{t}e^{-4\chi (t-\tau) }\left \| e^{\gamma \Delta  (t-\tau )} D^{m+1} \uu  (\cdot,\tau) \right \|_{L^{2}(\mathbb{R}^{3})}d \tau \\ 
	&\leq \: C(\alpha,m) \lambda _{0}(\alpha ) .
	\end{split}
\end{equation}
	
Therefore, combining \eqref{2w sup} , \eqref{3w sup} and \eqref{4w sup} , we have
\begin{equation*}
	\limsup _{t\rightarrow +\infty } t^{\alpha +\frac{m+1}{2}} \left \| D^{m} \ww \right \|_{L^{2}(\mathbb{R}^{3})} \: \leq \: C(\alpha,m) \lambda _{0}(\alpha ),
\end{equation*}
which concludes the proof of \eqref{inequality_w}.
 
\newpage
\appendix
\section{Proof of Claim \ref{Claim}}

In this section we shall prove Claim \ref{Claim} which has an independent interest. We start with the following lemma for the associated linear system \eqref{eqn:linear-system}. 

\begin{Lemma}\label{thm_linear_aprox}
	Let $ \zz(\cdot,t) $ be a weak solution for system \eqref{micropolar} for all $ t>0 $.  If \,   $32 \chi (\mu + \chi + \gamma) > 1$, then, given any pair of initial times $ \tilde{t}_0 \geq t_0 \geq 0  $, one has
	\begin{equation}\label{eqn:Difference_estimate_z} 
		\begin{split}
			\|D^m \bar{\uu}(\cdot,t;{t_0}) - D^m\bar{\uu}(\cdot,t;\tilde{t}_0) \|_{L^2(\RR^3)}  \leq C\, (t-\tilde{t}_0)^{-\frac{5}{4} - \frac{m}{2}},
		\end{split}
	\end{equation} 
	\begin{equation}\label{eqn:Difference_estimate_w}
		\begin{split}
			\|D^m \bar{\ww}(\cdot,t;{t_0}) - D^m\bar{\ww}(\cdot,t;\tilde{t}_0) \|_{L^2(\RR^3)}  \leq C\, e^{-2\,\chi\, (t-\tilde{t}_0)}\,(t-\tilde{t}_0)^{-\frac{5}{4} - \frac{m}{2}},
		\end{split}
	\end{equation}
where $\bar{\uu}, \bar{\ww}$ denotes the solution to the linear system \eqref{eqn:linear-system} with initial data $ \bar{\uu}(\cdot,t_0) := \uu(\cdot,t_0) $ and $ \bar{\ww}(\cdot,t_0) := \ww(\cdot,t_0) $. To be more precise, we denote such solution by $\bar{\zz} (\cdot , t ; t_0) = ( \bar{\uu} (\cdot , t ; t_0) , \bar{\ww} (\cdot , t ; t_0))$
\end{Lemma}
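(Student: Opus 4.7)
The proof hinges on a semigroup identity combined with Duhamel's formula for \eqref{micropolar}. Writing $e^{\mathbb{A}(t-t_0)}=e^{\mathbb{A}(t-\tilde{t}_0)}\,e^{\mathbb{A}(\tilde{t}_0-t_0)}$ and using the representation
\begin{equation*}
\zz(\cdot,\tilde{t}_0)=\bar{\zz}(\cdot,\tilde{t}_0;t_0)+\int_{t_0}^{\tilde{t}_0}e^{\mathbb{A}(\tilde{t}_0-s)}\mathbf{Q}(\cdot,s)\,ds,
\end{equation*}
one sees that the difference telescopes into the master identity
\begin{equation*}
\bar{\zz}(\cdot,t;t_0)-\bar{\zz}(\cdot,t;\tilde{t}_0)=-\int_{t_0}^{\tilde{t}_0} e^{\mathbb{A}(t-s)}\,\mathbf{Q}(\cdot,s)\,ds.
\end{equation*}
Applying $D^m$ and the $L^2$ norm reduces everything to heat-kernel type bounds on the right-hand side over the \emph{fixed} layer $[t_0,\tilde{t}_0]$, exploiting the elementary inequality $(t-s)^{-\sigma}\leq (t-\tilde{t}_0)^{-\sigma}$ valid for $s\in[t_0,\tilde{t}_0]$ and $\sigma>0$.

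Next, I would dominate the $\uu$-block of the semigroup by the heat kernel via Lemma \ref{lema-linear_opertaror} and then use Lemma \ref{lema D alfa}. For the convective terms I would write $(\uu\cdot\nabla)\uu=\nabla\cdot(\uu\otimes\uu)$ (and similarly for $\ww$), using $\nabla\cdot\uu=0$; this divergence form lets one derivative be absorbed by the kernel, producing the extra factor $(t-s)^{-1/2}$ that bumps the exponent from $-3/4-m/2$ up to the desired $-5/4-m/2$:
\begin{equation*}
\|D^m e^{\mathbb{A}(t-s)}\nabla\cdot(\uu\otimes\uu)\|_{L^2}\leq C\,(t-s)^{-5/4-m/2}\,\|\uu(\cdot,s)\|_{L^2}^{2}.
\end{equation*}
Combining with the uniform $L^2$ bound from \eqref{eqn_energy_inequality} and integrating over the fixed interval $[t_0,\tilde{t}_0]$ yields \eqref{eqn:Difference_estimate_z}. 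For the angular estimate \eqref{eqn:Difference_estimate_w}, the additional prefactor $e^{-2\chi(t-\tilde{t}_0)}$ comes from the spectral structure of the symbol matrix $M(\xi)$ in \eqref{eqn:matrix-symbols}: at $\xi=0$ the second diagonal block reduces to $-2\chi\,\mathrm{Id}$, so the $\ww$-component of $e^{\mathbb{A}t}$ enjoys a sharpening of Lemma \ref{lema-linear_opertaror} with an $e^{-2\chi t}$ prefactor (this is in the spirit of the Duhamel substitution $Z=e^{4\chi t}\ww$ used in Section 3). Inserting this refined bound into the previous estimates produces the exponential factor claimed.

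The main obstacle is the damping nonlinearity $\eta|\uu|^{\beta-1}\uu$: in contrast to the quadratic convective terms, it has no divergence structure, so the naive heat-kernel bound only delivers $(t-s)^{-3/4-m/2}$. To push this up to the advertised $(t-\tilde{t}_0)^{-5/4-m/2}$ rate, I would combine $\||\uu|^{\beta-1}\uu\|_{L^1}=\|\uu\|_{L^\beta}^\beta$ with Gagliardo--Nirenberg interpolation between $L^2$ and $\dot H^1$, and then exploit the global bound $\int_0^\infty\|\uu(\cdot,s)\|_{L^{\beta+1}}^{\beta+1}\,ds<\infty$ supplied by the damping in the strong energy inequality, trading a factor $(t-s)^{1/2}$ against an $L^2$ factor of $\uu$. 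This is the step where the admissible range of $\beta$ is likely to enter, and where the constant $C$ acquires its dependence on $t_0$ and $\tilde{t}_0$.
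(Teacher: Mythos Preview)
The paper does not actually prove this lemma: its entire proof reads ``See e.g.\ Theorem 2.3 in \cite{BrazGuterresPerusatoZingano2024}.'' So there is no in-paper argument to compare against; your outline (semigroup identity $e^{\mathbb{A}(t-t_0)}=e^{\mathbb{A}(t-\tilde t_0)}e^{\mathbb{A}(\tilde t_0-t_0)}$ combined with Duhamel to obtain the master identity, followed by heat-kernel smoothing on the fixed layer $[t_0,\tilde t_0]$) is the standard route and almost certainly what the cited reference does. Your treatment of the convective pieces via $(\uu\cdot\nabla)\uu=\nabla\cdot(\uu\otimes\uu)$ is correct and does deliver the exponent $-\tfrac54-\tfrac m2$.

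There is, however, a real gap at the damping term, and your proposed fix does not close it. Since $|\uu|^{\beta-1}\uu$ has no divergence structure, the best the $L^{1}\!\to\!L^{2}$ kernel bound gives is $(t-s)^{-3/4-m/2}$, and no choice of $L^{r}\!\to\!L^{2}$ smoothing with $r\geq 1$ can reach $-\tfrac54-\tfrac m2$ (that would force $r<1$). Your suggestion of ``trading a factor $(t-s)^{1/2}$ against an $L^{2}$ factor of $\uu$'' goes the wrong way: on the layer $[t_0,\tilde t_0]$ one has $(t-s)^{1/2}\geq(t-\tilde t_0)^{1/2}$, which \emph{grows}, and neither the energy inequality nor the $L^{\beta+1}_{t,x}$ bound supplies an extra $t^{-1/2}$ without assuming decay of $\uu$ that the lemma does not hypothesize. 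So either the cited Theorem 2.3 is stated for the undamped micropolar system (and the authors are tacitly relying on the fact that the damping contribution, at the weaker rate $(t-\tilde t_0)^{-3/4-m/2}$, is harmless for the only place the lemma is used), or it imposes a restriction on $\beta$ you have not identified. Note that in the sole application---the reduction at the start of the Appendix---only $m=0$ is needed and $\alpha+\hat\beta<\tfrac34$ in every case of Claim \ref{Claim}, so the weaker $(t-\tilde t_0)^{-3/4}$ bound on the damping piece already makes the relevant limsup vanish. Your heuristic for the exponential prefactor in \eqref{eqn:Difference_estimate_w} is in the right spirit, but be aware that the coupling $2\chi\nabla\times\bar\uu$ feeds a non-exponentially-decaying source into the $\bar\ww$-equation, so extracting $e^{-2\chi(t-\tilde t_0)}$ requires a genuine block-by-block analysis of $e^{M(\xi)t}$ rather than just the scalar substitution $Z=e^{4\chi t}\ww$.
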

\begin{proof}
	See e.g. Theorem 2.3 in \cite{BrazGuterresPerusatoZingano2024}. 
\end{proof}
Due to lemma \ref{thm_linear_aprox}  it is enough to prove the claim 
for times $t_0 $ larger than the regularity time  $t_*$. In fact, if the inequality \eqref{eqn: claim_difference} 
does hold for $t_0 > t_*$, then given any $\hat{t} \in [0 , t_*]$, Lemma
\ref{thm_linear_aprox} gives 
\begin{equation*}
	\begin{split}
		\limsup_{t \rightarrow \infty} t^{\alpha + \hat{\beta} } \|\Ez(\cdot,t;\hat{t}) \|_{L^2} \leq 
		\limsup_{t \rightarrow \infty} t^{\alpha + \hat{\beta} } \|\Ez(\cdot,t;t_0) \|_{L^2}  \\
		+ \limsup_{t \rightarrow \infty} t^{\alpha + \hat{\beta} } \underbrace{\|\Ez(\cdot,t;t_0) - \Ez(\cdot,t;\hat{t})  \|_{L^2}}_{= O(t^{-5/4})} \\ =    \limsup_{t \rightarrow \infty} t^{\alpha + \hat{\beta} } \|\Ez(\cdot,t;t_0) \|_{L^2} \leq C_{\,\beta,\,\hat{\beta},\, \lambda_0(\alpha)\,},
	\end{split}
\end{equation*}  
where, for convenience, we define $ \Ez(\cdot,t;t_0):= \zz(\cdot, t) - \bar{\zz}(\cdot, t; t_0) $. 
Now, by taking $ t_0 \geq t_\ast $ and using the integral representations for $ \zz(\cdot,t) $ and $ \bar{\zz} (\cdot , t ; t_0)  $, we get
\begin{align*}
	\Ez(\cdot,t;t_0) &\:=\: \zz(\cdot,t)  -  \bar{\zz}(\cdot,t;t_{0}) \\
	&\:=\: \zz(\cdot,t) - e^{\mathbb{A} (t-t_{0})} \zz(\cdot,t_{0}) \\
	&\:=\: \int_{t_{0}}^{t} e^{\mathbb{A} (t- \tau)} \begin{bmatrix}
		\mathbb{P}_{h}\left [ -\uu \cdot \nabla \uu - \eta \left | \uu \right |^{\beta -1}\uu \right ]\\ 
		-\uu \cdot \nabla \ww
	\end{bmatrix} d \tau 
\end{align*}
and, using Lemma \ref{lema-linear_opertaror}, this yields
\begin{equation}\label{error}
	\begin{split} 
	t^{\alpha +\hat{\beta} } & \left \|  \Ez(\cdot,t;t_{0}) \right \|_{L^{2}(\mathbb{R}^{3})} \\
	&\leq \: t^{\alpha +\hat{\beta}} \, \int_{t_{0}}^{t} \left \|  \, e^{c \Delta (t- \tau)} \,  (\uu \cdot \nabla \uu) \right \|_{L^{2}(\mathbb{R}^{3})} d\tau  \\
	&\; \; \; + \; t^{\alpha +\hat{\beta}} \,  \int_{t_{0}}^{t} \left \|  \, e^{c \Delta (t- \tau)} \, (\uu \cdot \nabla \ww) \right \|_{L^{2}(\mathbb{R}^{3})} d\tau  \\
	&\; \; \; + \; t^{\alpha +\hat{\beta}} \, \eta \int_{t_{0}}^{t} \left \|  \, e^{c \Delta  (t-\tau )} \, (  \left | \uu \right | ^{\beta -1}\uu) (\cdot,\tau) \right \|_{L^{2}(\mathbb{R}^{3})}d \tau. \\
	\end{split}
\end{equation}
We just need to examine each of the integrals above by splitting the interval $(t_0,t)$ into $(t_0, \mu(t)) \cup ( \mu(t), t) $, where $ \left.\mu(t):=\left(t+t_{0}\right) / 2\right) $. So, by applying the same idea used to obtain \eqref{2u sup} and \eqref{3u sup}, we get
\begin{align*}
	& t^{\alpha +\hat{\beta} } \int_{\frac{t}{2}}^{t} \left \|  e^{(\mu +\chi) \Delta  (t-\tau )} ( \uu \cdot \nabla \uu (\cdot,\tau) ) \right \|_{L^{2}(\mathbb{R}^{3})}d \tau\\
	&\leq \:C \,(\mu + \chi)^{-\frac{5}{4}} \: t^{\alpha + \hat{\beta} } \int_{\frac{t}{2}}^{t}  (t-\tau)^{-\frac{5}{4}}  \left \| \uu(\cdot,\tau) \right \|_{L^{2}(\mathbb{R}^{3})}^{2 } d\tau \\
	&\leq \: C\, (\mu + \chi)^{-\frac{5}{4}}  (\lambda _{0}(\alpha) + \epsilon )^{2} \: t^{\alpha + \hat{\beta} } \, \int_{\frac{t}{2}}^{t} (t-\tau)^{-\frac{5}{4}} \tau ^{-2 \alpha} d\tau  \\
	&\leq \: C\, (\mu + \chi)^{-\frac{5}{4}}  (\lambda _{0}(\alpha) + \epsilon )^{2} \, 2^{2\alpha -1},  
\end{align*}
  where we started by examining the integral on the interval $\left [ \frac{t}{2},t \right ]$. The first half needs to be studied in three cases depending on $\alpha$. 
  
  Case 1: $\alpha \in \left ( 0,\frac{1}{2} \right )$.
  \begin{align*}
  	& t^{\alpha +\hat{\beta}} \int_{t_{0}}^{\frac{t}{2}} \left \|  e^{(\mu +\chi) \Delta  (t-\tau )}  ( \uu \cdot \nabla \uu (\cdot,\tau) ) \right \|_{L^{2}(\mathbb{R}^{3})}d \tau\\
  	&\leq \: C\, (\mu + \chi)^{-\frac{5}{4}} \: t^{\alpha + \hat{\beta}} \int_{t_{0}}^{\frac{t}{2}}  (t-\tau)^{-\frac{5}{4}}  \left \| \uu(\cdot,\tau) \right \|_{L^{2}(\mathbb{R}^{3})}^{2 } d\tau \\
  	&\leq \: C(\mu , \chi) (\lambda _{0}(\alpha) + \epsilon)^{2} \: t^{\alpha + \hat{\beta}} \int_{t_{0}}^{\frac{t}{2}}  (t-\tau)^{-\frac{5}{4}} \tau ^{-2 \alpha} d\tau \\
  	&\leq \, C \; (\lambda _{0}(\alpha) + \epsilon)^{2} \: 2^{2\alpha + \frac{1}{4}} \: t^{-\alpha + \hat{\beta} - \frac{1}{4}} = \, C \; (\lambda _{0}(\alpha) + \epsilon)^{2}, 
  \end{align*}
where we take $ \hat{\beta} \,=\, \alpha + \frac{1}{4}$ in the last step. The second case is similar. 

Case 2: $\alpha \in \left ( \frac{1}{2},1 \right )$.
\begin{align*}
	& t^{\alpha +\hat{\beta}} \int_{t_{0}}^{\frac{t}{2}} \left \|  e^{(\mu +\chi) \Delta  (t-\tau )}  ( \uu \cdot \nabla \uu (\cdot,\tau) ) \right \|_{L^{2}(\mathbb{R}^{3})}d \tau\\
	&\leq \:C\, (\mu + \chi)^{-\frac{5}{4}} \: t^{\alpha + \hat{\beta}} \int_{t_{0}}^{\frac{t}{2}}  (t-\tau)^{-\frac{5}{4}}  \left \| \uu(\cdot,\tau) \right \|_{L^{2}(\mathbb{R}^{3})}^{2 } d\tau \\
	&\leq \:C(\mu ,\chi) \: t^{\alpha + \hat{\beta}} \int_{t_{0}}^{\frac{t}{2}}  (t-\tau)^{-\frac{5}{4}} \left \| \uu(\cdot,\tau) \right \|_{L^{2}(\mathbb{R}^{3})}  \left \| \zz(\cdot,t_{0}) \right \|_{L^{2}(\mathbb{R}^{3})} d\tau \\
	&\leq  \: C \; (\lambda _{0}(\alpha) + \epsilon) \: t^{\alpha + \hat{\beta}}  \left ( \frac{t}{2} \right )^{-\frac{5}{4}} \int_{t_{0}}^{\frac{t}{2}} \tau ^{- \alpha} d\tau 
	\leq C \; (\lambda _{0}(\alpha) + \epsilon) \:  \; t^{ \hat{\beta}  - \frac{1}{4}} 
	 \leq C \; (\lambda _{0}(\alpha) + \epsilon) ,    
\end{align*}
    where we take $\hat{\beta} \,=\, \frac{1}{4}$ in this case. 
    
    Case 3: $\alpha \in \left ( 1,\frac{5}{4} \right )$.
    \begin{align*}
    	& t^{\alpha +\hat{\beta}} \int_{t_{0}}^{\frac{t}{2}} \left \| e^{(\mu +\chi) \Delta  (t-\tau )}  (\uu \cdot \nabla \uu (\cdot,\tau) ) \right \|_{L^{2}(\mathbb{R}^{3})}d \tau\\
       	&\leq \: C(\mu , \chi) \: \left \| \zz(\cdot,t_{0}) \right \|_{L^{2}(\mathbb{R}^{3})} t^{\alpha + \hat{\beta}} \int_{t_{0}}^{\frac{t}{2}}  (t-\tau)^{-\frac{5}{4}} \left \| \uu(\cdot,\tau) \right \|_{L^{2}(\mathbb{R}^{3})}  d\tau \\
    	&\leq C\,(\lambda _{0}(\alpha) + \epsilon)  t^{\alpha + \hat{\beta}} \left ({t} \right )^{-\frac{5}{4}} \underbrace{\int_{t_{0}}^{\frac{t}{2}} \tau ^{- \alpha} d \tau}_{< \: \infty } 
    	\leq \: C\,(\lambda _{0}(\alpha) + \epsilon)\, t^{\alpha + \hat{\beta} - \frac{5}{4}} 
    	= \: C\,(\lambda _{0}(\alpha) + \epsilon)
    \end{align*}
    and here we finally take $ \hat{\beta} \,=\, \frac{5}{4} - \alpha$. The analysis of the remaining terms on \eqref{error} follows in a similar fashion.  

\bibliographystyle{plain}
\bibliography{PerusatoBiblio}{}

\end{document}